\newtheorem {theorem}{Theorem}
\newtheorem {corollary}[theorem]{Corollary}
\theoremstyle{definition}
\theoremstyle{remark}
\newtheorem {remark}[theorem]{Remark}
\newcommand{\var}{\operatorname{Var}}
\def\CC{\mathbb{C}}
\def\EE{\mathbb{E}}
\def\NN{\mathbb{N}}
\def\PP{\mathbb{P}}
\def\RR{\mathbb{R}}
\def\RRd1{\mathbb{R}^{d+1}}
\def\cN{\mathcal{N}}
\def\dint{\textup{d}}
\let\@fnsymbol\@alph
\begin{document}

\title{\bfseries On random convex chains, orthogonal polynomials, PF sequences and probabilistic limit theorems}

\author{Anna Gusakova\footnotemark[1]\;\;\;\;\;\; Christoph Th\"ale\footnotemark[2]}

\date{}
\renewcommand{\thefootnote}{\fnsymbol{footnote}}

\footnotetext[1]{Fakult\"at f\"ur Mathematik, Ruhr-Universit\"at Bochum, Germany. Email: anna.gusakova@rub.de}

\footnotetext[2]{Fakult\"at f\"ur Mathematik, Ruhr-Universit\"at Bochum, Germany. Email: christoph.thaele@rub.de}

\maketitle

\begin{abstract}
\noindent  Let $T$ be the triangle in the plane with vertices $(0,0)$, $(0,1)$ and $(0,1)$. The convex hull of $(0,1)$, $(1,0)$ and $n$ independent random points uniformly distributed in $T$ is the random convex chain $T_n$. A three-term recursion for the probability generating function $G_n$ of the number $f_0(T_n)$ of vertices of $T_n$ is proved. Via the link to orthogonal polynomials it is shown that $G_n$ has precisely $n$ distinct real roots in $(-\infty,0]$ and that the sequence $p_k^{(n)}:=\mathbb{P}(f_0(T_n)=k)$, $k=1,\ldots,n$, is a Polya frequency (PF) sequence. A selection of probabilistic consequences of this surprising and remarkable fact are discussed in detail.
\bigskip
\\
{\bf Keywords}. {Limit theorem, orthogonal polynomial, P\'olya frequency sequence, probability generating function, random convex chain, recurrence relation}\\
{\bf MSC 2020}.  33C47, 52A22, 60D05, 60F05.
\end{abstract}

\section{Introduction}

\begin{figure}[t]
\begin{center}
	\includegraphics[width=0.4\columnwidth]{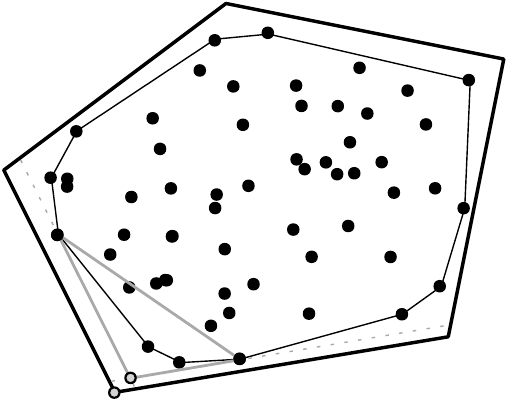}\qquad\qquad\qquad\qquad
	\includegraphics[width=0.3\columnwidth]{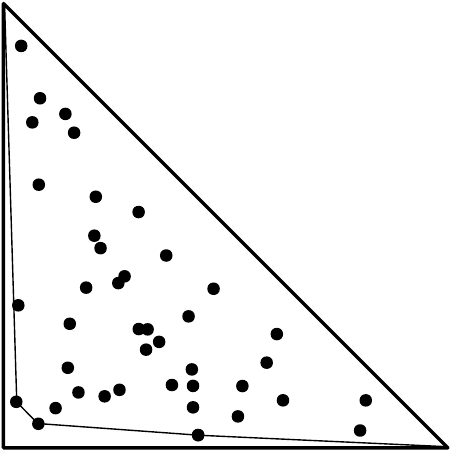}
\end{center}
\label{fig:poltri}
\caption{Illustration of a random polygon $P_n$ in a polygon $P$ together with a marked corner illustrating the reduction to a random convex chain (left) and a random convex chain $T_n$ in the right triangle $T$ (right).}
\end{figure}

Let $P\subset\RR^2$ be a convex polygon with $v\geq 3$ vertices. Further, let $X_1,\ldots,X_n$ be $n\in\NN=\{1,2,\ldots\}$ random points sampled independently and uniformly in $P$. Their convex hull $P_n$ is a random polygon in $P$, see Figure \ref{fig:poltri}. Such random polygons and their higher dimensional version are central objects studied in convex geometry and geometric probability. From the pioneering work of R\'enyi and Sulanke \cite{RenyiSulanke} it is known that the expected number of vertices $\EE f_0(P_n)$ satisfies
$$
\EE f_0(P_n) = \Big({2\over 3}\,v\,\log n\Big) (1+o(1)),\qquad\text{as}\qquad n\to\infty.
$$
After this result, despite many efforts, it took around 25 years until Groeneboom \cite{Groeneboom88} was able to compute the asymptotic variance of $f_0(P_n)$:
$$
\var f_0(P_n) = \Big({10\over 27}\,v\,\log n\Big) (1+o(1)),\qquad\text{as}\qquad n\to\infty.
$$
Moreover, he proved that the $f_0(P_n)$ satisfies a central limit theorem, that is,
$$
{f_0(P_n)- \EE f_0(P_n)\over\sqrt{\var f_0(P_n)}} \overset{d}{\longrightarrow} Z,\qquad\text{as}\qquad n\to\infty,
$$
where $\overset{d}{\longrightarrow}$ denotes convergence in distribution and $Z\sim\cN(0,1)$ stands for a standard Gaussian random variable, see also \cite{Groeneboom12,NagaevKhamdamov,Pardon}.

The argument of Groeneboom \cite{Groeneboom88} shows that the vertices of $P_n$ accumulate close to the corners of the polygon $P$ and that the behaviour in different corners is essentially independent. This motivates the study of $P_n$ in the neighbourhood of a single corner, which by affine invariance of $f_0(P_n)$ can be identified with the apex of a right angle. Therefore, if $T$ denotes the triangle with corners at $(0,0)$, $(0,1)$ and $(1,0)$ one considers the convex chain $T_n$, which is the convex hull of $n\in\NN$ uniform random points in $T$ together with the corners $(0,1)$ and $(1,0)$, see Figure \ref{fig:poltri}. 
In this reduced situation Buchta \cite{Buchta06} was able to determine the precise distribution of $f_0(T_n)$. He shows that, for $k=1,\ldots,n$,
\begin{equation}\label{eq:Probabilities}
\begin{split}
p_n^{(k)}:=\PP(f_0(T_n)=k) &= 2^k\sum_{i_1+\ldots+i_k=n}{1\over i_1(i_1+i_2)\cdots(i_1+\ldots+i_k)}\\
&\hspace{3cm}\times{i_1\cdots i_k\over (i_1+1)(i_1+i_2+1)\cdots(i_1+\ldots+i_k+1)},
\end{split}
\end{equation}
where the sum runs over all integers $i_1,\ldots,i_k\geq 1$ satisfying $i_1+\ldots+i_k=n$. We remark that the value $p_n^{(n)}={2^n\over n!(n+1)!}$ was earlier determined by B\'ar\'any, Roter Steiger and Zhang \cite{BaranyRoteSteigerZhang} (only a discrepancy of $2^n$ arises here since in \cite{BaranyRoteSteigerZhang} the square with vertices $(0,0)$, $(0,1)$, $(1,0)$ and $(0,0)$ instead of the triangle $T$ was considered). Although these explicit expressions are very hard to work with, in a subsequent paper Buchta \cite{Buchta12} was able to compute the first few moments of $f_0(P_n)$ explicitly and to derive a recursion scheme which, in principle, allows the computation of all higher moments as well. For example,
\begin{align*}
\EE f_0(T_n) &= {2\over 3}\sum_{k=1}^n{1\over k} + {1\over 3},\\
\EE f_0(T_n)^2 &= {4\over 9}\Big(\sum_{k=1}^n{1\over k}\Big)^2 + {22\over 27}\sum_{k=1}^n{1\over k}+{4\over 9}\sum_{k=1}^n{1\over k^2}-{25\over 27}+{4\over 9(n+1)}.
\end{align*}
However, the exact formulas (even for moments of order $3$ or $4$) become more and more involved so that we only mention the asymptotic relation, valid for $m\in\NN$,
\begin{align*}
	\EE f_0(T_n)^m = \Big({2\over 3}\log n\Big)^m + O((\log n)^{m-1}),\qquad\text{as}\qquad n\to\infty,
\end{align*}
which was derived in \cite{Buchta12} as well. 

The present paper takes up this classical topic once again. Our goal is to study fine probabilistic properties of the random variables $f_0(T_n)$. To this end, we develop further Buchta's approach from \cite{Buchta12} and deduce first a three-term recursion for the probability generating functions of $f_0(T_n)$. This establishes a new link between the number of vertices of the random convex chains $T_n$ and the theory of orthogonal polynomials. We argue that, very surprisingly and remarkably, all zeros of these generating functions are real, implying that for any fixed $n\in\NN$, the probabilities $(p_n^{(k)})_{k=0}^n$ form a so-called P\'olya frequency (PF) sequence.  Discrete probability distributions generated by such sequences are well studied and a large number of probabilistic results, such as central or local limit theorems and large deviation estimates, are available for them. This approach to probabilistic limit theorems dates back to the work of Harper \cite{Harper} and is rather well established in analytic combinatorics, since it applies to many random variables arising in this area. Examples include statistics associated with random permutations, random trees, random maps, random matchings or random graphs, to name just a few. The paper of Pitman \cite{Pitman} gives an impressive overview with many examples and a large number of references. However, we are not aware of any problem in geometric probability, for which this approach has so far been implemented. Our paper does this for the number of vertices of the random convex chains $T_n$ and this way we are able to shed new light onto probabilistic estimates for these random variables, which were not available before. It is our hope that this note will stimulate further research in this potentially fruitful direction in geometric probability and stochastic geometry.

\section{Generating function of the vertex number}

\subsection{Recurrence relations}

Let us briefly recall out set up: $X_1,\ldots,X_n$ are $n\in\NN$ independent random points uniformly distributed in the triangle $T$ with vertices $(0,0)$, $(0,1)$ and $(1,0)$ and $T_n$ is the convex chain generated by $X_1,\ldots,X_n$, that is, the convex hull of $X_1,\ldots,X_n$ together with the two vertices $(0,1)$ and $(1,0)$ of $T$. The random variable of interest is the number $f_0(T_n)$ of vertices of $T_n$ and we put $p_n^{(k)}:=\PP(f_0(T_n)=k)$, $k=1,\ldots,n$. The probability generating function of $f_0(T_n)$ is defined as
$$
G_n(z) := \sum_{k=1}^np_n^{(k)}z^k,\qquad z\in\RR.
$$
Note that $G_1(z)=z$ and for convenience we also define $G_0(z)=1$. Our first result is a three-term recurrence for the functions $G_n(z)$.

\begin{theorem}\label{thm:pgf}
The probability generating functions $G_n(z)$ of the random variables $f_0(T_n)$ satisfy the three-term recurrence relation
$$
G_n(z) = (a_nz+b_n)G_{n-1}(z)-c_nG_{n-2}(z),\qquad n\geq 2,\qquad z\in\RR,
$$
with
$$
a_n := {2\over n(n+1)},\qquad b_n:=2\,{n-1\over n+1}\qquad\text{and}\qquad c_n:={(n-1)(n-2)\over n(n+1)},
$$
and with initial data $G_0(z)=1$ and $G_1(z)=z$.
\end{theorem}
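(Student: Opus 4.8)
The plan is to start from Buchta's explicit formula \eqref{eq:Probabilities} and to reorganise it so that a simple recursion in the number of vertices becomes visible. Writing $s_j := i_1+\ldots+i_j$ for the partial sums of a composition $i_1+\ldots+i_k=n$, so that $0=s_0<s_1<\ldots<s_k=n$ and $i_j=s_j-s_{j-1}$, the two denominators in \eqref{eq:Probabilities} are exactly $s_1\cdots s_k$ and $(s_1+1)\cdots(s_k+1)$, while the numerator $i_1\cdots i_k$ equals $\prod_j(s_j-s_{j-1})$. Hence the summand telescopes into the product form
\[
p_n^{(k)}=\sum_{0=s_0<s_1<\ldots<s_k=n}\ \prod_{j=1}^k\frac{2(s_j-s_{j-1})}{s_j(s_j+1)}.
\]
First I would verify this rewriting, as it is the structural observation that makes everything else mechanical.

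Next I would split off the last factor, the one indexed by $j=k$ with $s_{k-1}=m$, which yields the one-step recursion
\[
p_n^{(k)}=\sum_{m=k-1}^{n-1}\frac{2(n-m)}{n(n+1)}\,p_m^{(k-1)},\qquad n\ge 1,\ k\ge 1,
\]
with the conventions $p_0^{(0)}=1$ and $p_m^{(k)}=0$ in all other degenerate cases. Multiplying by $z^k$, summing over $k\ge 1$, and using that $G_m(z)=\sum_{k\ge 0}p_m^{(k)}z^k$ holds for every $m\ge 0$ (including $m=0$, where it reduces to $G_0=1$), this collapses into a single full-history recursion for the generating functions,
\[
G_n(z)=\frac{2z}{n(n+1)}\sum_{m=0}^{n-1}(n-m)\,G_m(z),\qquad n\ge 1.
\]

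The final step is to convert this full-history relation into a genuine three-term recurrence by the standard device of finite differences. Setting $H_n:=\frac{n(n+1)}{2z}G_n=\sum_{m=0}^{n-1}(n-m)G_m$, which is a polynomial in $z$ since $G_n$ carries no constant term, one computes the first difference $H_n-H_{n-1}=\sum_{m=0}^{n-1}G_m$ and therefore the second difference $H_n-2H_{n-1}+H_{n-2}=G_{n-1}$. Substituting $H_\ell=\frac{\ell(\ell+1)}{2z}G_\ell$ for $\ell=n,n-1,n-2$ and clearing the factor $2z$ produces the polynomial identity
\[
n(n+1)\,G_n-2(n-1)n\,G_{n-1}+(n-1)(n-2)\,G_{n-2}=2z\,G_{n-1},
\]
which rearranges into $G_n=(a_nz+b_n)G_{n-1}-c_nG_{n-2}$ with precisely the stated $a_n,b_n,c_n$; the initial data $G_0=1$ and $G_1=z$ are immediate from the definitions.

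The only genuinely delicate point is the bookkeeping in the first two steps. One must track the boundary terms in the composition sums with care, in particular the edge cases $k=1$ and $m=0$, so as to confirm that the full-history recursion holds with \emph{no} exceptional cases. This uniformity is exactly what guarantees that the telescoping in the last step introduces no stray boundary terms. Once the full-history form is established, the passage to the three-term recurrence is routine.
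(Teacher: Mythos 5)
Your proposal is correct and follows essentially the same route as the paper: both start from Buchta's explicit formula \eqref{eq:Probabilities}, extract the one-step recursion $p_n^{(k)}=\frac{2}{n(n+1)}\sum_{m=k-1}^{n-1}(n-m)p_m^{(k-1)}$, and then eliminate the full history by taking second differences in $n$, arriving at the same key identity $n(n+1)G_n(z)-2n(n-1)G_{n-1}(z)+(n-1)(n-2)G_{n-2}(z)=2z\,G_{n-1}(z)$. The only difference is the order of the two commuting steps: the paper differences at the level of the probabilities and then sums against $z^k$, whereas you pass to generating functions first (obtaining the full-history relation for $G_n$) and difference afterwards.
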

\begin{proof}
The argument is based on the explicit representation of the probabilities $p_k^{(n)}$ from \eqref{eq:Probabilities}. We start by recalling an argument from the proof of Theorem 1 in \cite{Buchta12}. Since $i_1,\ldots,i_k\geq 1$ in \eqref{eq:Probabilities}, we necessarily have that $1\leq i_k\leq n-k+1$ and $k-1\leq j:=i_1+\ldots+i_{k-1}\leq n-1$. It follows that $p_k^{(n)}$ can be rewritten as
$$
p_k^{(n)} = {2\over n(n+1)}\sum_{j=k-1}^{n-1}(n-j)p_{k-1}^{(j)}
$$
with $p_0^{(0)}=1$ and $p_0^{(j)}=0$ for $j\geq 1$. Multiplying the last equality with $n(n+1)\over 2$ and doing the same with $p_k^{(n-1)}$ we conclude that
\begin{equation}\label{eq:rec0}
{n(n+1)\over 2}p_k^{(n)} - {n(n-1)\over 2}p_k^{(n-1)} = \sum_{j=k-1}^{n-1}p_{k-1}^{(j)}.
\end{equation}
Replacing $n$ by $n-1$ and subtracting the resulting identity from \eqref{eq:rec0}, we see that
\begin{align*}
\Big({n(n+1)\over 2}p_k^{(n)} - {n(n-1)\over 2}p_k^{(n-1)}\Big) - \Big({n(n-1)\over 2}p_k^{(n-1)} - {(n-1)(n-2)\over 2}p_k^{(n-2)}\Big) = p_{k-1}^{(n-1)},
\end{align*}
or, equivalently,
\begin{equation}\label{eq:rec2}
{n(n+1)\over 2}p_k^{(n)} - n(n-1)p_k^{(n-1)} + {(n-1)(n-2)\over 2}p_k^{(n-2)} = p_{k-1}^{(n-1)}
\end{equation}
for $n\geq 2$ and $k=1,\ldots,n$.

We now multiply both sides of \eqref{eq:rec2} by $z^k$, $z\in\RR$ and sum over all possible values of $k$:
\begin{align*}
{n(n+1)\over 2}\sum_{k=1}^np_k^{(n)}z^k - n(n-1)\sum_{k=1}^{n-1}p_k^{(n-1)}z^k + {(n-1)(n-2)\over 2}\sum_{k=1}^{n-2}p_k^{(n-2)}z^k = \sum_{k=2}^{n}p_{k-1}^{(n-1)}z^k.
\end{align*}
Using the definition of the probability generating function of $f_0(T_n)$ this can be rephrased by saying that
\begin{align*}
{n(n+1)\over 2}G_n(z) - n(n-1)G_{n-1}(z) + {(n-1)(n-2)\over 2}G_{n-2}(z) = zG_{n-1}(z).
\end{align*}
Solving for $G_n(z)$ leads to
\begin{align*}
G_n(z) &= {2\over n(n+1)}\Big[zG_{n-1}(z)+n(n-1)G_{n-1}(z)-{(n-1)(n-2)\over 2}G_{n-2}(z)\Big]\\
&=\Big({2\over n(n+1)}\,z+2{n-1\over n+1}\Big)G_{n-1}(z) - {(n-1)(n-2)\over n(n+1)}G_{n-2}(z)\\
&= (a_nz+b_n)G_{n-1}(z) - c_nG_{n-2}(z).
\end{align*}
This completes the argument.
\end{proof}

The previous theorem can be used to determine recursively the functions $G_n(z)$. For example,
\begin{align*}
	G_2(z) &= {1\over 3}\,z(z+2)\\
	G_3(z) &= {1\over 18}\,z(z^2+8z+9)\\
	G_4(z) &= {1\over 180}\,z(z^3+20z^2+87z+72)\\
	G_5(z) &= {1\over 2\,700}\,z(z^4+40z^3+427z^2+1\,332z+900)\\
	G_6(z) &= {1\over 56\,700}\,z(z^5+70z^4+1\,477z^3+11\,142z^2+27\,810z+16\,200),
\end{align*}
see Figure \ref{fig:poltri2}.

\begin{figure}[t]

\begin{center}
	\includegraphics[width=0.45\columnwidth]{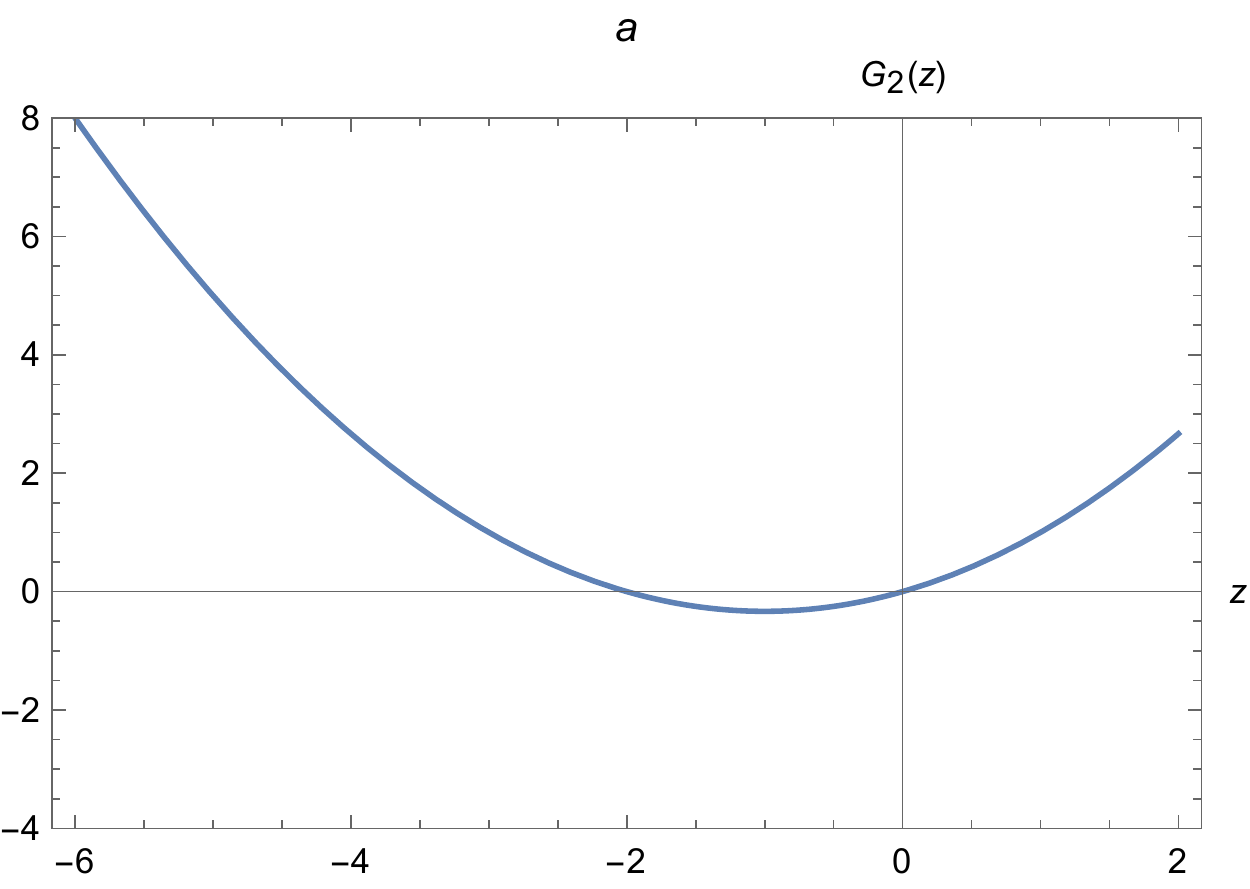}\qquad
	\includegraphics[width=0.45\columnwidth]{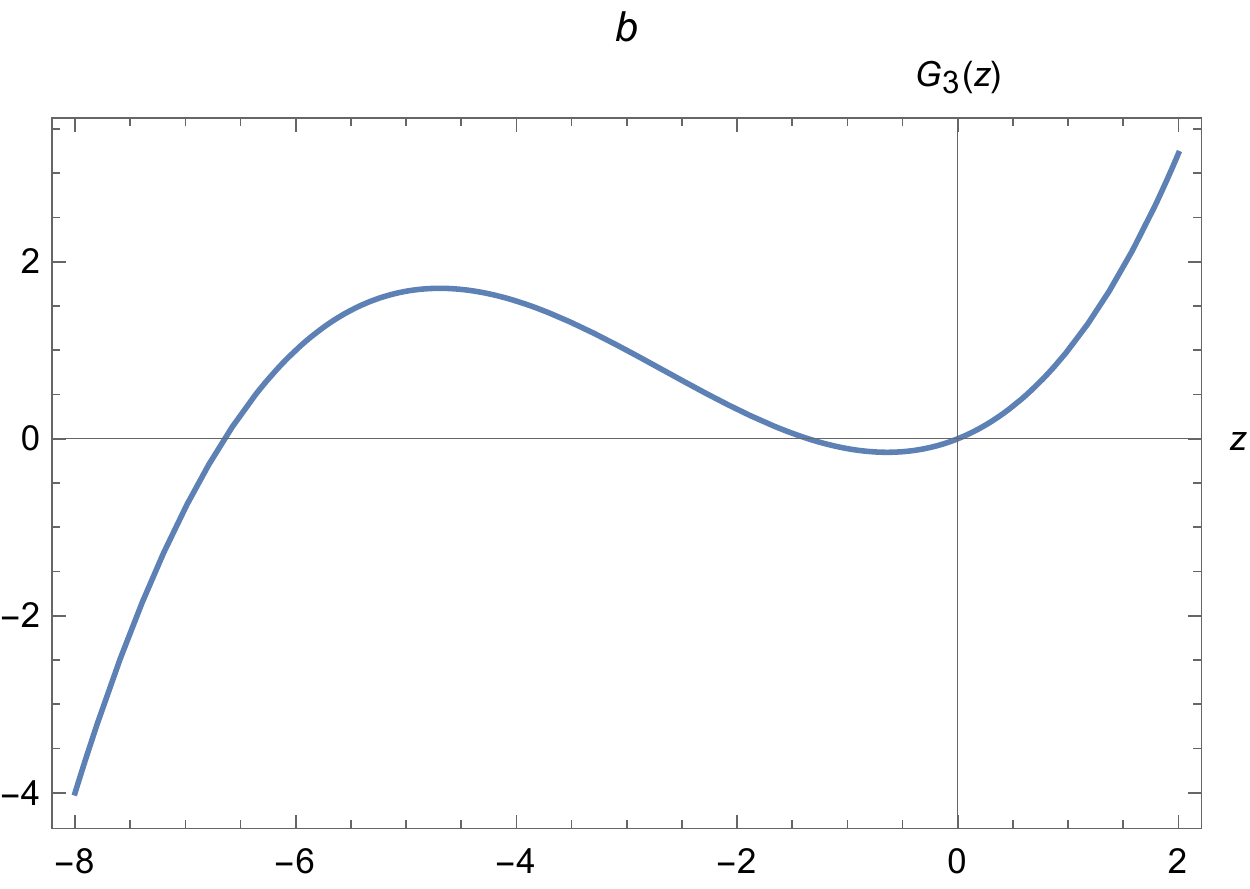}\\
	\includegraphics[width=0.45\columnwidth]{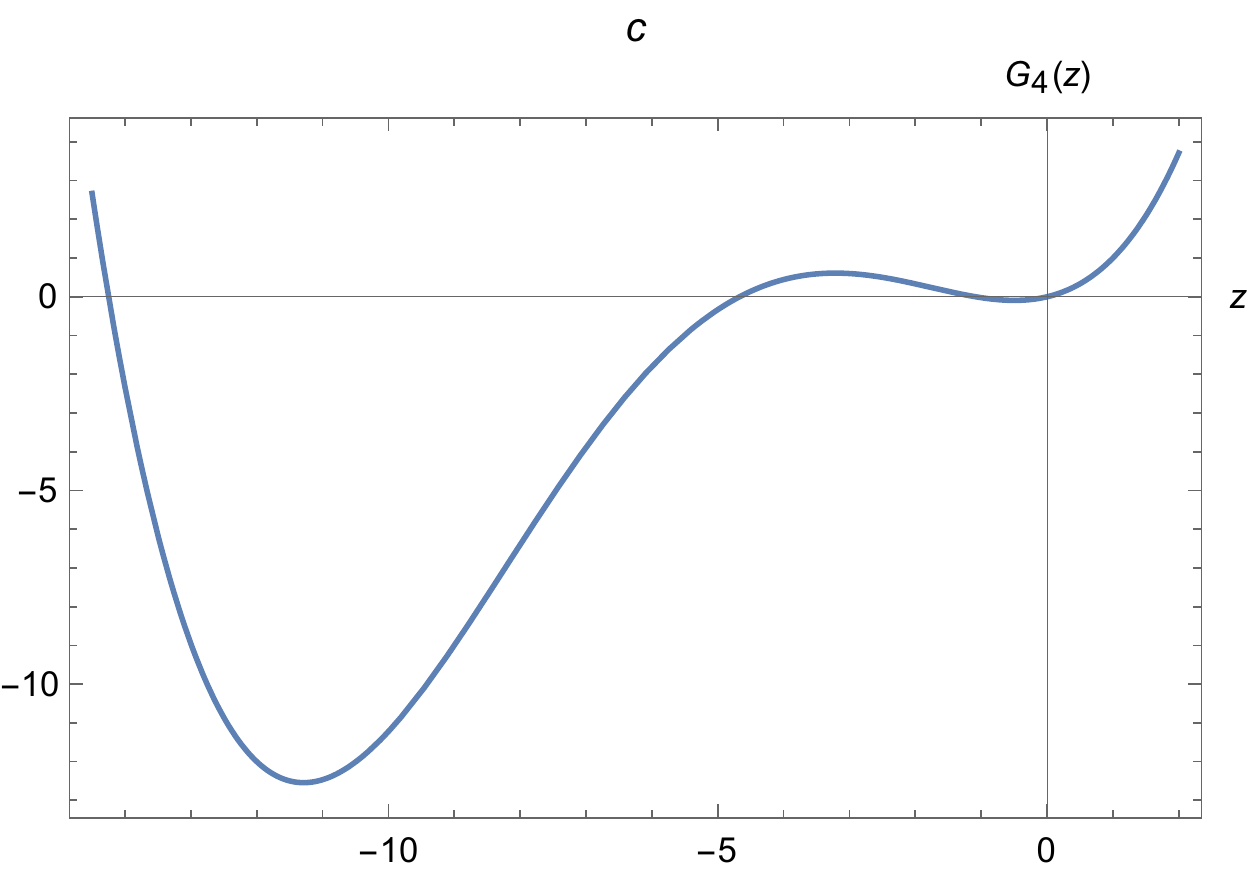}\qquad
	\includegraphics[width=0.45\columnwidth]{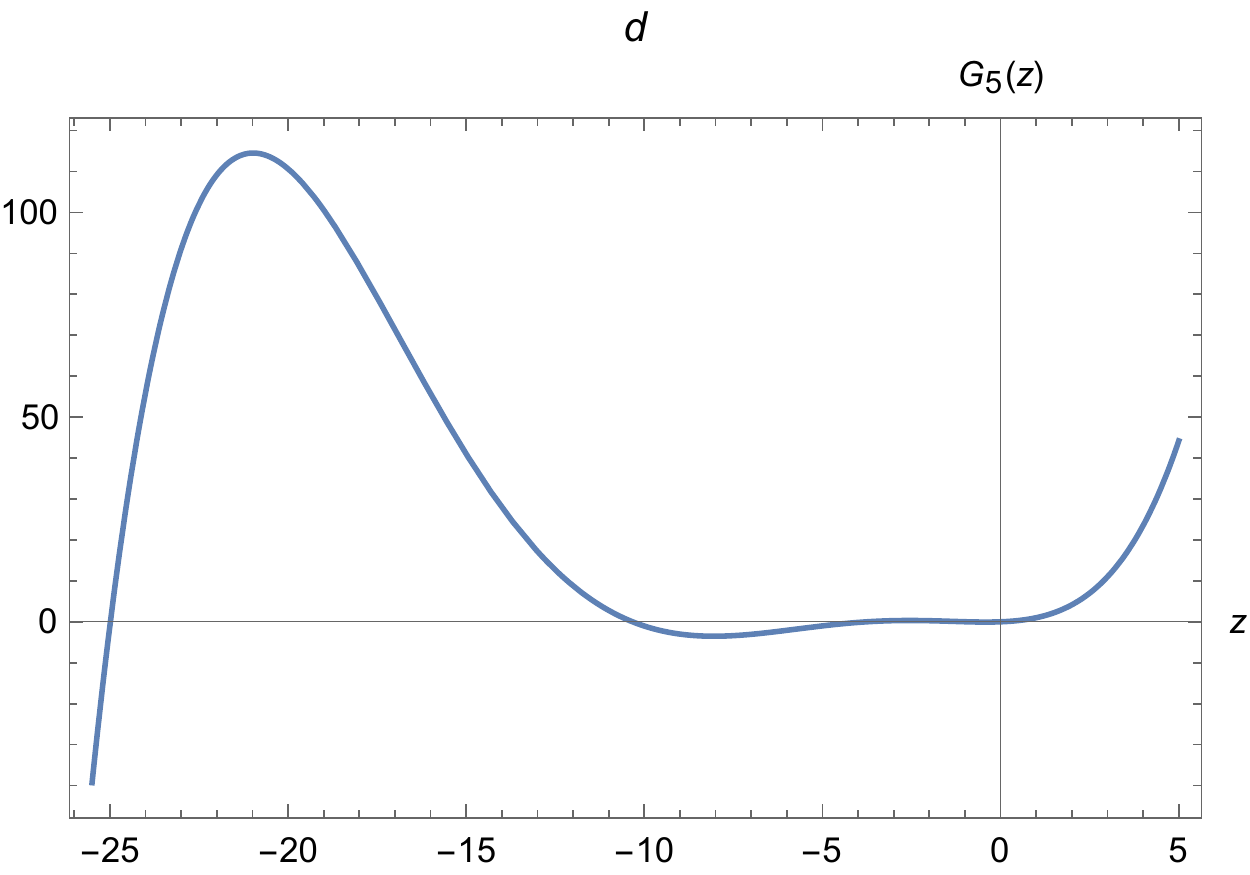}
\end{center}
\caption{Graphs of the functions $G_n(z)$ illustrating the location of the roots: (a) $n=2$, (b) $n=3$, (c) $n=4$, (d) $n=5$.}
	\label{fig:poltri2}
\end{figure}

Alternatively to the three-term recursion we also have the following representation of $G_n(z)$, which is close in spirit to the representation of the $m$-th moment $\EE f_0(T_n)^m$ in \cite[Theorem 2]{Buchta12}. In particular, the same weights $w_{k,n}$ are involved.

\begin{theorem}\label{thm:pgfAlternative}
For $n\geq 2$ the probability generating function $G_n(z)$ of $f_0(T_n)$ satisfies
$$
G_n(z) = z +(z-1)\sum_{k=1}^{n-1}w_{k,n}G_k(z),\qquad z\in\RR,
$$
with initial data $G_1(z)=z$ and with the weights $w_{k,n}$ given by
$$
w_{k,n}=2k(k+1)\sum_{i=k+1}^n{1\over i^2(i^2-1)}.
$$
\end{theorem}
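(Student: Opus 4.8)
The plan is to isolate a single auxiliary identity from which the representation follows by a telescoping sum. Specifically, I would first prove that for every integer $m\ge 1$ one has the polynomial identity
\[
2(z-1)\sum_{k=1}^{m-1}k(k+1)G_k(z)=m^2(m^2-1)\bigl(G_m(z)-G_{m-1}(z)\bigr),
\]
which I will call $(C_m)$. (This is consistent with $G_m(1)=G_{m-1}(1)=1$, so that $z-1$ divides $G_m-G_{m-1}$ and both sides are genuine polynomials.) The shape of $(C_m)$ --- the weight $k(k+1)$ inside the sum and the normalisation $m^2(m^2-1)$ --- is precisely what is needed to reproduce the weights $w_{k,n}$ after summation, and it can be guessed by inspecting the explicit polynomials $G_2,\dots,G_6$ listed above.

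Granting $(C_m)$, the theorem is immediate. Writing the telescoping sum $G_n=G_1+\sum_{m=2}^n(G_m-G_{m-1})$, substituting $G_1=z$ and solving $(C_m)$ for the increment $G_m-G_{m-1}$ gives
\[
G_n(z)=z+2(z-1)\sum_{m=2}^n\frac{1}{m^2(m^2-1)}\sum_{k=1}^{m-1}k(k+1)G_k(z).
\]
Interchanging the order of the two finite summations --- for a fixed $k\in\{1,\dots,n-1\}$ the index $m$ ranges over $k+1\le m\le n$ --- rewrites the right-hand side as $z+(z-1)\sum_{k=1}^{n-1}\bigl(2k(k+1)\sum_{i=k+1}^n\tfrac{1}{i^2(i^2-1)}\bigr)G_k(z)$, and the bracketed coefficient is exactly $w_{k,n}$. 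This is the claimed formula.

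It remains to establish $(C_m)$, which I would do by induction on $m$. The base case $(C_1)$ is trivial, since both sides vanish ($m^2(m^2-1)=0$ and the sum is empty). For the inductive step I would subtract $(C_m)$ from $(C_{m+1})$; the left-hand sides differ by the single term $2(z-1)m(m+1)G_m$, so the step reduces to verifying
\[
2(z-1)m(m+1)G_m=(m+1)^2\bigl((m+1)^2-1\bigr)(G_{m+1}-G_m)-m^2(m^2-1)(G_m-G_{m-1}).
\]
Dividing through by $m(m+1)$ and collecting the coefficients of $G_{m+1}$, $G_m$ and $G_{m-1}$, this is exactly the generating-function form $\tfrac{n(n+1)}2 G_n-n(n-1)G_{n-1}+\tfrac{(n-1)(n-2)}2 G_{n-2}=zG_{n-1}$ of the recurrence \eqref{eq:rec2} with $n=m+1$. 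Thus the whole induction is powered by the three-term recurrence of Theorem~\ref{thm:pgf}. The main obstacle is the very first move --- guessing the auxiliary identity $(C_m)$ together with its precise weights; once $(C_m)$ is in hand, both the inductive verification and the concluding change of summation order are purely mechanical.
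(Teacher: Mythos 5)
Your proof is correct, and it is structurally parallel to the paper's: both arguments run off the three-term recurrence of Theorem~\ref{thm:pgf}, rewritten so that $(z-1)G_{n-1}(z)$ sits on the right-hand side, and both conclude with the identical interchange of the two finite summations to produce the weights $w_{k,n}$. The difference lies in the middle step. The paper does not guess any auxiliary identity: it cites the general solution formula, established in the proof of Theorem~2 of Buchta's paper \cite{Buchta12}, for the second-order difference equation
\[
\tfrac{n(n+1)}{2}f(n)-(n^2-n+1)f(n-1)+\tfrac{(n-1)(n-2)}{2}f(n-2)=g(n),
\]
namely $f(n)=f(1)+\sum_{i=2}^n\tfrac{2}{i^2(i^2-1)}\bigl(6(f(2)-f(1))+\sum_{k=3}^i(k-1)kg(k)\bigr)$, and applies it with $f(n)=G_n(z)$, $g(n)=(z-1)G_{n-1}(z)$. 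Your identity $(C_m)$ is exactly the content hidden inside that citation: it asserts that the bracketed inner sum equals $\tfrac{m^2(m^2-1)}{2}(G_m-G_{m-1})$, which is what makes the outer sum telescope to $G_n-G_1$. So in effect you have re-derived, by induction from Theorem~\ref{thm:pgf}, the special case of Buchta's solution formula that the paper imports wholesale. What your route buys is self-containedness and elementarity --- no appeal to the literature beyond Theorem~\ref{thm:pgf}, at the cost of having to guess $(C_m)$ and its normalisation. What the paper's route buys is brevity and a formula valid for an arbitrary inhomogeneity $g$, which also makes transparent why the same weights $w_{k,n}$ appear here and in Buchta's moment representation. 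Your inductive step does check out: after dividing by $m(m+1)$ and collecting terms, the required identity is precisely $\tfrac{n(n+1)}{2}G_n-n(n-1)G_{n-1}+\tfrac{(n-1)(n-2)}{2}G_{n-2}=zG_{n-1}$ with $n=m+1$, which holds for all $m\geq 1$ thanks to the initial data $G_0(z)=1$, $G_1(z)=z$.
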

\begin{proof}
Let $f,g:\NN\to\RR$ be arbitrary functions satisfying, for $n\geq 2$, the difference equation
\begin{equation}\label{eq:DifferenceEquation}
{n(n+1)\over 2}f(n)-(n^2-n+1)f(n-1)+{(n-1)(n-2)\over 2}f(n-2) = g(n).
\end{equation}
In the proof of Theorem 2 in \cite{Buchta12} it has been shown that in such a situation $f(n)$ is given by
$$
f(n) = f(1) + \sum_{i=2}^n{2\over i^2(i^2-1)}\Big(6(f(2)-f(1))+\sum_{k=3}^i (k-1)kg(k)\Big).
$$
By subtracting $G_{n-1}(z)$ from both sides in the recurrence equation in Theorem \ref{thm:pgf} we see that for any $z\in\RR$ the function $f(n)=G_n(z)$ satisfies \eqref{eq:DifferenceEquation} with $g(n)=(z-1)G_{n-1}(z)$. Thus,
\begin{align*}
	G_n(z) = G_1(z) + \sum_{i=2}^n{2\over i^2(i^2-1)}\Big(6(G_2(z)-G_1(z))+\sum_{k=3}^i (k-1)k(z-1)G_{k-1}(z)\Big).
\end{align*}
Since $G_1(z)=z$ and $G_2(z)={1\over 3}z(z+2)$ this reduces to
\begin{align*}
G_n(z) &= z + (z-1)\sum_{i=2}^n{2\over i^2(i^2-1)}\Big(2z+\sum_{k=3}^i (k-1)kG_{k-1}(z)\Big)\\
&= z + (z-1)\sum_{i=2}^n{2\over i^2(i^2-1)}\sum_{k=2}^i (k-1)kG_{k-1}(z)\\
&= z + (z-1)\sum_{i=2}^n{2\over i^2(i^2-1)}\sum_{k=1}^{i-1} k(k+1)G_{k}(z).
\end{align*}
Interchanging both sums and using the definition of $w_{k,n}$ leads to
\begin{align*}
G_n(z) &= z + (z-1)\sum_{k=1}^{n-1}k(k+1)G_k(z)\sum_{i=k+1}^n{2\over i^2(i^2-1)}\\
&=z +(z-1)\sum_{k=1}^{n-1}w_{k,n}G_k(z).
\end{align*}
This completes the proof.
\end{proof}

\subsection{Associated orthogonal polynomials and PF sequence property}

It directly follows from Theorem \ref{thm:pgf} or Theorem \ref{thm:pgfAlternative} that for each $n\in\NN$, $G_n(z)$ is a polynomial of degree $n$ with leading term $p_n^{(n)}z^n$. Consequently, these polynomials are not monic, meaning that their leading coefficients are different from $1$ (except if $n=1$, of course). On the other hand, dividing them by $p_n^{(n)}={2^n\over n!(n+1)!}$ we obtain the sequence of monic polynomials
$$
H_n(z) := {G_n(z)\over p_n^{(n)}},\qquad n\in\NN,\qquad z\in\RR.
$$
The poynomials $H_n(z)$ satisfy the following three-term recurrence relation.

\begin{theorem}
The polynomials $H_n(z)$ satisfy the three-term recurrence relation
$$
H_n(z) = (z+\beta_n)H_{n-1}(z) - \gamma_nH_n(z),\qquad n\geq 2,\qquad z\in\RR,
$$
with 
$$
\beta_n := n(n-1)\qquad\text{and}\qquad \gamma_n := {n(n-1)^2(n-2)\over 4},
$$
and with initial data $H_0(z)=1$ and $H_1(z)=z$.
\end{theorem}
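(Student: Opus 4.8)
The plan is to deduce this recurrence directly from the three-term recurrence for $G_n(z)$ in Theorem \ref{thm:pgf}, by inserting the definition $G_n(z) = p_n^{(n)}H_n(z)$ and tracking the ratios of the leading coefficients $p_n^{(n)} = {2^n\over n!(n+1)!}$. (The term $\gamma_nH_n(z)$ in the statement should of course read $\gamma_nH_{n-2}(z)$.) No idea beyond Theorem \ref{thm:pgf} is needed: the whole argument is bookkeeping of factorials.

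First I would rewrite the recurrence of Theorem \ref{thm:pgf}, replacing $G_m(z)$ by $p_m^{(m)}H_m(z)$ for $m=n,n-1,n-2$, to obtain
$$
p_n^{(n)}H_n(z) = (a_nz+b_n)\,p_{n-1}^{(n-1)}H_{n-1}(z) - c_n\,p_{n-2}^{(n-2)}H_{n-2}(z).
$$
Dividing through by $p_n^{(n)}$ reduces the claim to the two scalar identities
$$
(a_nz+b_n)\,{p_{n-1}^{(n-1)}\over p_n^{(n)}} = z+\beta_n,
\qquad
c_n\,{p_{n-2}^{(n-2)}\over p_n^{(n)}} = \gamma_n .
$$

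Next I would evaluate the two factorial quotients. From $p_m^{(m)} = {2^m\over m!(m+1)!}$ one finds ${p_{n-1}^{(n-1)}/p_n^{(n)}} = {n(n+1)\over 2}$ and ${p_{n-2}^{(n-2)}/p_n^{(n)}} = {n^2(n-1)(n+1)\over 4}$. Substituting $a_n = {2\over n(n+1)}$ and $b_n = {2(n-1)\over n+1}$ into the first quotient cancels the $n(n+1)$ factors and leaves $z+n(n-1)=z+\beta_n$; substituting $c_n = {(n-1)(n-2)\over n(n+1)}$ into the second gives ${n(n-1)^2(n-2)\over 4}=\gamma_n$. The initial data $H_0(z)=1$ and $H_1(z)=z$ follow at once from $p_1^{(1)}=1$ together with $G_0(z)=1$ and $G_1(z)=z$.

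Since every step is an algebraic simplification, there is no real obstacle here; the only point requiring minor care is the correct cancellation of the factorial quotients (keeping $n!\,(n+1)!$ straight against $(n-1)!\,n!$ and $(n-2)!\,(n-1)!$), after which the stated values of $\beta_n$ and $\gamma_n$ drop out automatically.
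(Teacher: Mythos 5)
Your proof is correct and follows essentially the same route as the paper: divide the recurrence of Theorem \ref{thm:pgf} by $p_n^{(n)}$, evaluate the factorial ratios $p_{n-1}^{(n-1)}/p_n^{(n)}={n(n+1)\over 2}$ and $p_{n-2}^{(n-2)}/p_n^{(n)}={n^2(n-1)(n+1)\over 4}$, and simplify to obtain $\beta_n$ and $\gamma_n$. You also correctly identified that $\gamma_nH_n(z)$ in the statement is a typo for $\gamma_nH_{n-2}(z)$, which is exactly what the paper's own proof derives.
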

\begin{proof}
Dividing both sides of the recurrence relation in Theorem \ref{thm:pgf} by $p_n^{(n)}$ leads to
\begin{align}\label{eq:rec1}
{G_n(n)\over p_n^{(n)}} = (a_nz+b_n)\,{G_{n-1}(z)\over p_n^{(n)}} - c_n\,{G_{n-2}(z)\over p_n^{(n)}}.
\end{align}
Using that $p_n^{(n)}={2^n\over n!(n+1)!}$ we have that, for $n\geq 2$,
$$
p_{n-1}^{(n-1)} = {2^{n-1}\over (n-1)!n!} = {n(n+1)\over 2}\,p_n^{(n)}
$$
and
$$
p_{n-2}^{(n-2)} = {2^{n-2}\over (n-2)!(n-1)!} = {n^2(n-1)(n+1)\over 4}\,p_n^{(n)}.
$$
Consequently, \eqref{eq:rec1} can be rewritten in terms of $H_n(z)$, $H_{n-1}(z)$ and $H_{n-2}(z)$ as
\begin{align*}
H_n(z) &= \Big({n(n+1)\over 2}a_n\,z+{n(n+1)\over 2}b_n\Big)H_{n-1}(z) - {n^2(n-1)(n+1)\over 4}c_n\,H_{n-2}(z)\\
&=\Big(z+n(n-1)\Big)H_{n-1}(z) - {n(n-1)^2(n-2)\over 4}\,H_{n-2}(z)\\
&=(z+\beta_n)H_{n-1}(z) - \gamma_n\,H_{n-2}(z),
\end{align*}
which completes the argument.
\end{proof}

Three-term recurrence relations as the one satisfied by $H_n(z)$ according to the previous theorem are very classical in mathematics and characterize orthogonal polynomials. In fact, Favard's theorem \cite{Favard,MarcellanAlvarez} implies that there exists a unique positive Borel measure $\mu$ on the real line such that the sequence of polynomials $(H_n(z))_{n\geq 0}$ is orthogonal with respect to the canonical inner product $\langle f,g\rangle_\mu := \int f(x)g(x)\,\mu(\dint x)$
induced by $\mu$. It seems a non-trivial task to construct the measure $\mu$. On the other hand, for our purposes the existence of $\mu$ is sufficient and it is not clear to us if the explicit knowledge of $\mu$ would have any further probabilistic consequences.

The zeros of orthogonal polynomials are well understood, see Chapter 3 in the classical reference \cite{Szego}. In particular, they are all real and distinct according to \cite[Theorem 3.3.1]{Szego}. Since the orthogonal polynomials $H_n(z)$ are just normalizations of the generating functions $G_n(z)$, the same is true for the zeros of $G_n(z)$ as well. In addition, since the coefficients of $G_n(z)$ are positive (they are probabilities), all roots of $G_n(z)$ are necessarily located on the non-positive real half-axis. We summarize our findings in the next theorem.

\begin{theorem}\label{thm:Zeros}
For each $n\in\NN$ the probability generating function $G_n(z)$ of $f_0(T_n)$ is a polynomial of degree $n$ with precisely $n$ distinct real roots in $(-\infty,0]$.
\end{theorem}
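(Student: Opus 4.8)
The plan is to run the argument through the theory of orthogonal polynomials, exactly as the preceding discussion suggests, but first I would deal with one genuine subtlety that a naive application of Favard's theorem would hide. Every generating function has vanishing constant term, so $G_n(0)=0$ and $z\mid G_n(z)$ for all $n\ge 1$; since the coefficient of $z$ in $G_n$ is $p_n^{(1)}>0$, the point $z=0$ is in fact a \emph{simple} root of each $G_n$, and hence a common root of all the monic normalisations $H_n$. Distinct members of a bona fide orthogonal polynomial sequence can never share a zero, so $(H_n)_{n\ge 0}$ cannot literally be such a sequence. The arithmetic origin of this obstruction is the coefficient $\gamma_n=\tfrac{n(n-1)^2(n-2)}{4}$ in the monic three-term recurrence established above: one has $\gamma_n>0$ for every $n\ge 3$, but $\gamma_2=0$, so Favard's theorem does not apply to $(H_n)$ as it stands.

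To repair this I would deflate the common linear factor. Writing $H_n(z)=z\,Q_{n-1}(z)$ for $n\ge 1$ defines monic polynomials $Q_m$ of degree $m$ with $Q_0=1$ and $Q_1=z+\beta_2$. For $n\ge 3$ all three of $H_n,H_{n-1},H_{n-2}$ carry the factor $z$, so substituting into $H_n=(z+\beta_n)H_{n-1}-\gamma_nH_{n-2}$ and cancelling $z$ yields, after the shift $m=n-1\ge 2$,
$$
Q_m(z)=(z+\beta_{m+1})\,Q_{m-1}(z)-\gamma_{m+1}\,Q_{m-2}(z),
$$
whose recurrence coefficients are $\beta_{m+1}\in\RR$ and $\gamma_{m+1}>0$ for all $m\ge 2$ --- the offending index has been shifted out of range. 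Thus $(Q_m)_{m\ge 0}$ is a genuine monic orthogonal polynomial sequence: Favard's theorem \cite{Favard} produces a positive Borel measure with respect to which it is orthogonal, and \cite[Theorem 3.3.1]{Szego} then guarantees that each $Q_m$ has precisely $m$ distinct real zeros.

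It then remains to locate and count the zeros of $G_n$ itself. Since the coefficients $p_n^{(k)}$ of $G_n$ are probabilities, they are nonnegative, with $p_n^{(1)}>0$ and $p_n^{(n)}>0$; hence $G_n(z)>0$ for every $z>0$, so $G_n$ has no positive real zero, and likewise $Q_{n-1}(z)=G_n(z)/(p_n^{(n)}z)$ is positive on $(0,\infty)$. The $n-1$ real zeros of $Q_{n-1}$ are therefore distinct and, being nonzero because $Q_{n-1}(0)=p_n^{(1)}/p_n^{(n)}>0$, strictly negative. Assembling the factorisation $G_n(z)=p_n^{(n)}\,z\,Q_{n-1}(z)$, the zeros of $G_n$ are exactly the simple root $0$ together with these $n-1$ distinct negative numbers, giving $n$ pairwise distinct real roots all lying in $(-\infty,0]$, which is the claim. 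The small cases $n=1,2$ can be read off directly from $G_1=z$ and $G_2=\tfrac13 z(z+2)$.

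The single real obstacle is the degeneracy $\gamma_2=0$, equivalently the divisibility of every $G_n$ by $z$: it blocks a direct appeal to Favard's theorem for $(H_n)$ and must be removed by the deflation above before the standard zero theory of orthogonal polynomials can be invoked. Everything after that --- orthogonality, reality and simplicity of zeros, and the sign localisation to $(-\infty,0]$ --- is routine.
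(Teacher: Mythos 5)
Your proof is correct, and it follows the same overall route as the paper --- normalise $G_n$ to the monic $H_n$, invoke Favard's theorem to obtain orthogonality with respect to a positive measure, quote \cite[Theorem 3.3.1]{Szego} for reality and simplicity of the zeros, and finish with positivity of the coefficients to push all roots into $(-\infty,0]$. The difference is that the paper applies Favard's theorem \emph{directly} to the sequence $(H_n)_{n\ge 0}$, which is precisely the step you flag as illegitimate: since $\gamma_2=0$, the positivity hypothesis of Favard's theorem fails, and in fact no positive measure with infinite support can make $(H_n)_{n\geq 0}$ orthogonal, because $H_1=z$ and $H_2=z(z+2)$ --- indeed all $H_n$ with $n\ge 1$ --- share the root $z=0$, whereas consecutive orthogonal polynomials can never have a common zero (their zeros strictly interlace; alternatively, a common zero of $H_n$ and $H_{n-1}$ would propagate down the recurrence to $H_0=1$). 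Your deflation $H_n(z)=z\,Q_{n-1}(z)$, the shifted recurrence $Q_m=(z+\beta_{m+1})Q_{m-1}-\gamma_{m+1}Q_{m-2}$ with $\gamma_{m+1}>0$ for $m\ge 2$, and the transfer of the zero count back to $G_n$ repair this gap cleanly, so your argument is not a reproduction of the paper's proof but a corrected version of it; the paper's remark that the result also follows from \cite[Corollary 2.4]{LuiWang} offers an independent escape route, but the orthogonal-polynomial proof as printed does need your fix. One small caveat: your blanket claim that distinct members of an orthogonal polynomial sequence can never share a zero is false in general (the Legendre polynomials of degrees $1$ and $3$ both vanish at the origin); what is true, and all your argument needs, is the statement for \emph{consecutive} members, and here $H_1$ and $H_2$ are already consecutive offenders.
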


\begin{remark}\rm 
The result of Theorem \ref{thm:Zeros} can also be concluded from \cite[Corollary 2.4]{LuiWang}. In fact, from this it follows that the sequence of polynomials $G_n(z)$ is a so-called Sturmian sequence. For such sequence it is known that its members only have real roots. We preferred the more classical way via orthogonal polynomials.
\end{remark}

Next, we recall from \cite{Karlin} some classical notions from the theory of total positivity. An infinite matrix $A=(a_{ij})_{i,j\geq 0}$ is called totally positive if all minors of $A$ have non-negative determinant. An infinite sequence $(a_k)_{k\geq 1}$ of non-negative real numbers is a P\'olya frequence (PF) sequence if the matrix $(a_{i-j})_{i,j\geq 0}$ is totally positive, where one puts $a_k:=0$ if $k<0$. A finite sequence $(a_k)_{k=0}^n$ is a PF sequence, provided the infinite sequence $a_0,a_1,\ldots,a_n,0,0,\ldots$ is a PF sequence. PF sequences appear often in combinatorial problems and have remarkable properties. We refer to \cite{Pitman} for an overview containing many applications and references. For example, a finite PF sequence $(a_k)_{k=0}^n$ satisfies a strong form of a log-concavity property, meaning that
$$
a_k^2 \geq a_{k-1}a_{k+1}\Big(1+{1\over k}\Big)\Big(1+{1\over n-k}\Big)
$$
for $k=1,\ldots n-1$. Moreover, discrete probability distributions generated by PF sequences also display a large number of remarkable properties, some of which we discuss in the Section \ref{sec:Conseq} below. Our next result shows that the sequence of probabilities $(p_k^{(n)})_{k=1}^n$ is a PF sequence. To the best of our knowledge, this is the fist time, that such a property is established for a discrete probability distribution arising in the area of geometric probability. As just anticipated, we will discuss some consequences of this fact in the next section.

\begin{theorem}\label{thm:PF}
For any $n\in\NN$ the sequence $(p_k^{(n)})_{k=1}^n$ given by \eqref{eq:Probabilities} is a PF sequence.
\end{theorem}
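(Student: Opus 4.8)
The plan is to connect the PF sequence property directly to the root structure of the generating function $G_n(z)$ established in Theorem \ref{thm:Zeros}. The crucial bridge is a classical theorem of Aissen, Schoenberg and Whitney (see also \cite{Karlin,Pitman}): a finite sequence $(a_k)_{k=0}^n$ of non-negative reals is a PF sequence if and only if its generating polynomial $\sum_{k=0}^n a_k z^k$ has only real (necessarily non-positive, given the sign of the coefficients) zeros. Since Theorem \ref{thm:Zeros} asserts precisely that $G_n(z)=\sum_{k=1}^n p_k^{(n)}z^k$ has $n$ distinct real roots, all lying in $(-\infty,0]$, the hypothesis of this characterization is met verbatim.

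First I would recall the formal statement of the Aissen--Schoenberg--Whitney theorem in the form we need, namely the equivalence between the PF property of a finite non-negative sequence and the reality of the zeros of its ordinary generating function. I would note that the coefficients $p_k^{(n)}$ are genuine probabilities and hence non-negative (indeed strictly positive for $k=1,\dots,n$), so the sequence $(p_k^{(n)})_{k=0}^n$ with $p_0^{(n)}:=0$ appended is a non-negative sequence of the required type. Then I would invoke Theorem \ref{thm:Zeros} to supply the real-rootedness of $G_n(z)$, which is the one analytic input the characterization demands.

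The remaining step is purely bookkeeping: one must check that prepending the zero term $p_0^{(n)}=0$, or equivalently factoring out the common root at $z=0$, does not interfere with the application of the criterion. Concretely, $G_n(z)=z\cdot Q_{n-1}(z)$ where $Q_{n-1}$ is a degree $(n-1)$ polynomial with non-negative coefficients and only real non-positive roots; since the product of two real-rooted polynomials with non-negative coefficients again generates a PF sequence, and since a single factor of $z$ merely shifts the sequence, the PF property is preserved. I expect no genuine obstacle here — the heavy lifting has already been done in establishing the reality of the roots via the orthogonal polynomial machinery, and the present theorem is essentially a direct corollary of Theorem \ref{thm:Zeros} together with the classical characterization.

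I would close by emphasizing that this completes the chain of reasoning: the three-term recurrence of Theorem \ref{thm:pgf} produces orthogonal polynomials, Favard's theorem and \cite[Theorem 3.3.1]{Szego} yield real distinct roots, and the Aissen--Schoenberg--Whitney characterization converts real-rootedness into the PF property for the probability sequence $(p_k^{(n)})_{k=1}^n$.
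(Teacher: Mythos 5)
Your proposal is correct and follows essentially the same route as the paper: both invoke Theorem \ref{thm:Zeros} together with the Aissen--Schoenberg--Whitney characterization of finite PF sequences via real-rootedness of the generating polynomial. The additional bookkeeping you carry out concerning the appended coefficient $p_0^{(n)}=0$ (equivalently, the factor of $z$ in $G_n$) is a harmless elaboration of a point the paper leaves implicit.
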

\begin{proof}
This follows from Theorem \ref{thm:Zeros} together with the Aissen-Schoenberg-Whitney theorem from \cite{AissenEtAl}, which says that a finite sequence $(a_k)_{k=0}^n$ of non-negative real numbers if a PF sequence if and only if the generating function $\sum_{k=0}^na_kz^k$ has only real roots.
\end{proof}

\section{Probabilistic consequences for the vertex number}\label{sec:Conseq}

In this section we describe a selection of probabilistic consequences that Theorem \ref{thm:Zeros} and Theorem \ref{thm:PF} have for the number of vertices $f_0(T_n)$ of the random convex chain in the triangle $T$.

\paragraph{Representation as sum of independent random variables.}

We start our discussion with the following unexpected decomposition for which we do not have a geometric interpretation. Namely, according to Theorem \ref{thm:Zeros} the probability generating function $G_n(z)$ of $f_0(T_n)$ has precisely $n$ distinct roots in $(-\infty,0]$, which we denote by $-\infty<r_n<\ldots<r_2<r_1\leq 0$. Since $p_0^{(n)}=0$ for $n\geq 1$, $z$ is a trivial factor of $G_n(z)$, so that $r_1=0$. As a consequence, $G_n(z)$ can be decomposed into a product of linear factors:
\begin{equation}\label{eq:DefRoots}
G_n(z) = p_n^{(n)}\prod_{k=1}^{n}(z-r_k) = {2^n\over n!(n+1)!}\prod_{k=1}^{n}(z-r_k).
\end{equation}
However, this is precisely the probability generating function of a sum of $n$ independent Bernoulli random variables, see also \cite[Proposition 1]{Pitman}.

\begin{corollary}\label{cor:Bernoulli}
For each $n\in\NN$ let $B_1,\ldots,B_n$ be independent Bernoulli random variables satisfying 
$$
\PP(B_k=1) = {1\over 1-r_k}\qquad\text{and}\qquad \PP(B_k=0) = 1-{1\over 1-r_k},\qquad k = 1,\ldots,n,
$$
where $0=r_1,r_2,\ldots,r_n$ are the roots of $G_n(z)$ given by \eqref{eq:DefRoots}. Then the distributional identity $f_0(T_n) \overset{d}{=} 1 + B_2 + \ldots + B_n$ holds.
\end{corollary}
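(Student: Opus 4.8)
The plan is to verify that the right-hand side of the factorisation \eqref{eq:DefRoots} is exactly the probability generating function of the random variable $1+B_2+\ldots+B_n$, and to confirm that the $B_k$ are genuine Bernoulli variables, i.e.\ that the success probabilities $1/(1-r_k)$ lie in $[0,1]$. Since $r_1=0$ is a root, the factor $(z-r_1)=z$ appears in the product, which is what produces the deterministic summand $1$. First I would rewrite the remaining linear factors in the normalised form in which a Bernoulli generating function naturally appears. For each root $r_k$ with $k\geq 2$ we have $r_k<0$, so $1-r_k>0$ and we may write
\begin{equation*}
z-r_k = (1-r_k)\Big(\frac{1}{1-r_k}\,z + \Big(1-\frac{1}{1-r_k}\Big)\Big) = (1-r_k)\big(q_k + p_k z\big),
\end{equation*}
where $p_k:=1/(1-r_k)$ and $q_k:=1-p_k=-r_k/(1-r_k)$. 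The factor $q_k+p_kz$ is precisely $\EE z^{B_k}$ for a Bernoulli variable with $\PP(B_k=1)=p_k$, so the product of these factors over $k=2,\ldots,n$ is the generating function of $B_2+\ldots+B_n$ by independence, and multiplying by the trivial factor $z$ shifts this by the constant $1$.

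The second step is bookkeeping of the constant prefactors. The factorisation carries the leading coefficient $p_n^{(n)}=2^n/(n!(n+1)!)$ out front, and pulling the scalars $(1-r_k)$ out of each linear factor contributes $\prod_{k=2}^n(1-r_k)$. I would check that these combine to $1$, which must hold because $G_n(1)=\sum_{k}p_n^{(k)}=1$ (total probability) forces the whole right-hand side, evaluated at $z=1$, to equal $1$; since each $\EE z^{B_k}$ also evaluates to $1$ at $z=1$, the product of prefactors is pinned to $1$ automatically. Thus no separate constant needs to be tracked, and one concludes
\begin{equation*}
\EE z^{f_0(T_n)} = G_n(z) = z\prod_{k=2}^n\big(q_k+p_kz\big) = \EE z^{1+B_2+\ldots+B_n},
\end{equation*}
which gives the claimed distributional identity since a distribution supported on $\NN$ is determined by its probability generating function.

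The only genuine point requiring care, and the step I expect to be the main obstacle, is establishing that each $p_k=1/(1-r_k)\in[0,1]$, so that the $B_k$ really are legitimate Bernoulli variables rather than formal factors. This is where Theorem \ref{thm:Zeros} does the essential work: it guarantees that every root $r_k$ is real and lies in $(-\infty,0]$. Realness is what allows the linear factorisation over $\RR$ in the first place, and non-positivity gives $1-r_k\geq 1>0$, whence $0<p_k\leq 1$ and $q_k=1-p_k\in[0,1)$. Without the location of the roots on the non-positive half-axis one would obtain either complex factors or Bernoulli ``probabilities'' outside $[0,1]$, so the geometric input is exactly the reality-and-sign statement already proved. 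Everything else is the routine identification of a product of binomials with a convolution of independent Bernoulli laws, as recorded in \cite[Proposition 1]{Pitman}.
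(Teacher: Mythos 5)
Your proposal is correct and follows essentially the same route as the paper: factor $G_n(z)$ over its real non-positive roots (Theorem \ref{thm:Zeros}), peel off the trivial factor $z$ coming from $r_1=0$, and identify the remaining product with the probability generating function of a sum of independent Bernoulli variables. The only difference is that you carry out the normalisation $(z-r_k)=(1-r_k)(q_k+p_kz)$ and the prefactor check via $G_n(1)=1$ explicitly, whereas the paper delegates exactly this bookkeeping to \cite[Proposition 1]{Pitman}.
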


Note that since $r_1=0$, $B_1$ is always the constant `random' variable $1$. Next, for $n=2$ we have that
$$
f_0(T_2) \overset{d}{=} 1 + B_2\qquad\text{with}\qquad\PP(B_2 = 1) = {1\over 3},
$$
since $G_2(z)$ has roots $r_1=0$ and $r_2=-2$, while for $n=3$ it holds that
$$
f_0(T_3) \overset{d}{=} 1 + B_2 + B_3\qquad\text{with}\qquad\PP(B_2 = 1) = {1\over 5-\sqrt{7}},\quad\PP(B_3 = 1) = {1\over 5+\sqrt{7}},
$$
which arises from the fact that the roots of $G_3(z)$ are $r_1=0$, $r_2=-4+\sqrt{7}$ and $r_3=-4-\sqrt{7}$. Moreover, for $n=4$ we may define $\Delta:={1\over 3}(\arctan({27\sqrt{1895}\over 1142})-\pi)$ in order to express the roots of $G_4(z)$ as $r_1=0$, $r_2={1\over 3}({\sqrt{139}}\cos\Delta-{20})$, $r_3=-{1\over 3}(\sqrt{139}(\sqrt{3}\sin\Delta+\cos\Delta)+20)$ and $r_4=-{1\over 3}(\sqrt{139}(-\sqrt{3}\sin\Delta+\cos\Delta)+20)$. This leads to a  representation of $f_0(T_4)$ as a sum of independent Bernoulli random variables $1+B_2+B_3+B_4$ with success probabilities $\PP(B_k=1)=1/(1-r_k)$ for $k=2,3,4$.

\medbreak

Although the representation of $f_0(T_n)$ is not fully explicit in view of the unknown zeros $r_2,\ldots,r_n$, Corollary \ref{cor:Bernoulli} has a number of striking consequence as we shall demonstrate in the next paragraphs.

\paragraph{Mod-Gaussian convergence and bound on cumulants.}

The concept of mod-$\phi$ convergence is a recent, powerful and very elegant  tool to show probabilistic limit theorems. Namely, once mod-$\phi$ convergence has been established for a sequence of random variables, one automatically obtains a whole collection of probabilistic estimates and limit theorems, including the central limit theorem, the local limit theorem and moderate deviation bounds. We refer to \cite{FerayModPhi} for general background material.

In our case, we will have to deal with the special notion of so-called mod-Gaussian convergence only, for which we recall the following definition from \cite[Definition 1.1.1]{FerayModPhi}. Let $(Y_n)_{n\geq 1}$ be a sequence of random variables whose moment generating functions $\varphi_n(z):=\EE\exp(zY_n)$ are defined on some strip $S=\{z\in\CC:c_-<{\rm Re}(z)<c_+\}$ in the complex plane, where $-\infty\leq c_-<0<c_+\leq\infty$ (note that $S=\CC$ if $c_-=-\infty$ and $c_+=\infty$). The sequence $(Y_n)_{n\geq 1}$ convergences in the mod-Gaussian sense if
$$
\lim_{n\to\infty}{\varphi_n(z)\over e^{w_nz^2/2}} = \psi(z)
$$
locally uniformly on $S$, where $(w_n)_{n\geq 1}$ is some sequence of positive real numbers satisfying $w_n\to\infty$, as $n\to\infty$, and $\psi(z)$ is some analytic function on $S$. As explained in \cite{FerayModPhi}, mod-Gaussian convergence roughly means that $Y_n$ has approximately the same distribution as the $w_n$-th convolution power of the Gaussian distribution. The `difference' between these distributions is described by the so-called limit function $\psi(z)$, which plays a crucial role in the theory.

We show now mod-Gaussian convergence of the sequence of suitably normalized random variables $f_0(T_n)$. For that purpose let us recall from \cite[Corollary 2 and Corollary 3]{Buchta12} that for each $n\in\NN$ the variance $\sigma_n^2:=\var(f_0(T_n))$ and the third cumulant $L_n^3:=\kappa_3(f_0(T_n))$ of the vertex number $f_0(T_n)$ of the random convex chain $T_n$ are given by
\begin{align*}
	\sigma_n^2 &= {10\over 27}\sum_{k=1}^{n}{1\over n}+{4\over 9}\sum_{k=1}^{k}{1\over k^2}-{28\over 27}+{4\over 9(n+1)}
\end{align*}
and
\begin{align*}
	L_n^3 &= {14\over 81}\sum_{k=1}^{n}{1\over k}+{20\over 27}\sum_{k=1}^{n}{1\over k^2}+{16\over 27}\sum_{k=1}^{n}{1\over k^3}-{16\over 9}\sum_{k=1}^n\sum_{\ell=1}^{k}{1\over\ell^2} \\
	&\qquad\qquad\qquad\qquad+ {172\over 81}-{8\over 9}\Big({1\over n}+{1\over n+1}\Big)\sum_{k=1}^{n}{1\over k}-{28\over 27(n+1)},
\end{align*}
respectively. In particular, we note that
$$
\sigma_n^2=\Big({10\over 27}\log n\Big)(1+o(1))\qquad\text{and}\qquad L_n^3=\Big({14\over 81}\log n\Big)(1+o(1)),
$$
as $n\to\infty$, in the usual Landau notation.

\begin{corollary}\label{cor:modGauss}
The sequence of random variables
\begin{equation}\label{eq:defYn}
Y_n := {f_0(T_n)-\EE f_0(T_n)\over L_n},\qquad n\in\NN,
\end{equation}
converges in the mod-Gaussian sense in the whole complex plane with limiting function $\psi(z)=e^{z^3/6}$ and with $w_n={\sigma_n^2\over L_n^2}=\big({10\over 3}\big({3\over 14}\big)^{2/3}(\log n)^{1/3}\big)(1+o(1))$.
\end{corollary}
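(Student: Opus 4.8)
The plan is to exploit the decomposition from Corollary \ref{cor:Bernoulli}, which writes $f_0(T_n)$ as a sum $1+B_2+\ldots+B_n$ of independent Bernoulli random variables, and then to run the classical method of cumulants. Writing $K_n(t):=\log\EE\exp(tf_0(T_n))$ for the cumulant generating function, additivity of cumulants over independent summands yields $\kappa_j(f_0(T_n))=\sum_{k=2}^n\kappa_j(B_k)$ for every $j$. Since the $B_k$ are bounded, $K_n$ is entire, so the moment generating function $\varphi_n(z)=\EE\exp(zY_n)$ is defined on all of $\CC$ and I may take the strip to be $S=\CC$, matching the claim of convergence in the whole plane. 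Centring and rescaling, the linear terms cancel and
\[
\log\varphi_n(z)=K_n(z/L_n)-(z/L_n)\,\EE f_0(T_n)=\sum_{j\geq 2}\kappa_j(f_0(T_n))\,\frac{z^j}{j!\,L_n^j}.
\]

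The second point is that the normalisation has been designed so that the two leading terms come out exactly right. The $j=2$ term equals $\tfrac{\sigma_n^2}{2L_n^2}z^2=\tfrac{w_n}{2}z^2$ by the definition $w_n=\sigma_n^2/L_n^2$, while the $j=3$ term equals $\tfrac{L_n^3}{6L_n^3}z^3=\tfrac{z^3}{6}$ by the definition $L_n=\kappa_3(f_0(T_n))^{1/3}$. Hence
\[
\log\frac{\varphi_n(z)}{e^{w_nz^2/2}}=\frac{z^3}{6}+\sum_{j\geq 4}\kappa_j(f_0(T_n))\,\frac{z^j}{j!\,L_n^j},
\]
so mod-Gaussian convergence with limit function $\psi(z)=e^{z^3/6}$ will follow as soon as the remainder sum is shown to tend to $0$ locally uniformly on $\CC$.

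The heart of the matter is therefore a bound on the higher cumulants. For a single Bernoulli variable $B$ with parameter $p$, the cumulant $\kappa_j(B)$ is a polynomial in $p$ vanishing at $p=0$ and $p=1$ (where $B$ is deterministic), hence divisible by $p(1-p)=\kappa_2(B)$; equivalently, since the centred $B_k$ are bounded by $1$, a standard Bernstein-type cumulant estimate gives $|\kappa_j(B_k)|\leq c^{\,j}\,j!\;\kappa_2(B_k)$ for an absolute constant $c$. Summing over $k$ and using additivity produces the clean bound $|\kappa_j(f_0(T_n))|\leq c^{\,j}\,j!\;\sigma_n^2$ for all $j\geq 2$. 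Inserting this, for $|z|\leq R$ and $n$ large enough that $cR/L_n<\tfrac12$ (possible since $L_n\to\infty$), the remainder is dominated by a convergent geometric series,
\[
\Big|\sum_{j\geq 4}\kappa_j(f_0(T_n))\,\frac{z^j}{j!\,L_n^j}\Big|\leq\sigma_n^2\sum_{j\geq 4}\Big(\frac{cR}{L_n}\Big)^j=O\Big(\frac{\sigma_n^2}{L_n^4}\Big).
\]
Because $\sigma_n^2=\Theta(\log n)$ while $L_n^3=\Theta(\log n)$ forces $L_n^4=\Theta((\log n)^{4/3})$, this is $O((\log n)^{-1/3})\to 0$, uniformly for $|z|\leq R$. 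I expect this uniform (rather than merely termwise) control of the cumulant tail to be the main technical point, and it is precisely the divisibility-by-$\kappa_2$ together with the $j!$-type growth of the constants that makes the geometric estimate close.

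Finally, the stated asymptotics of $w_n$ follow by substituting $\sigma_n^2=\big(\tfrac{10}{27}\log n\big)(1+o(1))$ and $L_n^3=\kappa_3(f_0(T_n))=\big(\tfrac{14}{81}\log n\big)(1+o(1))$ into $w_n=\sigma_n^2/L_n^2=\sigma_n^2/(L_n^3)^{2/3}$, which gives
\[
w_n=\frac{10/27}{(14/81)^{2/3}}\,(\log n)^{1/3}(1+o(1))=\frac{10}{3}\Big(\frac{3}{14}\Big)^{2/3}(\log n)^{1/3}(1+o(1))
\]
after simplification, completing the argument.
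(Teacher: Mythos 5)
Your proposal is correct, but it follows a genuinely different route from the paper. The paper's proof is a one-line application of a black-box result: it combines the Bernoulli decomposition of Corollary \ref{cor:Bernoulli} with \cite[Theorem 8.2.1]{FerayModPhi}, checking only that the factors in \eqref{eq:DefRoots} have degree $1$ and that $L_n=\Theta((\log n)^{1/3})=o(\sigma_n)$. You instead give a self-contained cumulant proof of the same statement: additivity of cumulants over the independent $B_k$, the Statulevi\v{c}ius-type bound $|\kappa_j(B_k)|\leq c^j j!\,\kappa_2(B_k)$ for Bernoulli variables (this is exactly condition $(B_0)$, which the paper itself invokes with $K=2$ when deriving \eqref{eq:CumBound}), and a geometric-series estimate showing the tail $\sum_{j\geq 4}$ is $O(\sigma_n^2/L_n^4)=O((\log n)^{-1/3})$ uniformly on compact sets. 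Both proofs hinge on Corollary \ref{cor:Bernoulli}; yours buys transparency about the mechanism and makes the error term quantitative, while the paper's buys brevity and places the result inside the general mod-$\phi$ framework used for the subsequent corollaries. Your constant for $w_n$ simplifies to the paper's value, since $\tfrac{10/27}{(14/81)^{2/3}}=\tfrac{10}{3^{1/3}14^{2/3}}=\tfrac{10}{3}\bigl(\tfrac{3}{14}\bigr)^{2/3}$.

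Two imprecisions you should fix, neither fatal. First, the claim ``since the $B_k$ are bounded, $K_n$ is entire'' is false: the moment generating function $\varphi_n$ is entire (boundedness of $Y_n$ gives this), but its logarithm is not, because $\EE\exp(zB_k)=1-p_k+p_ke^z$ has complex zeros; consequently the cumulant series $\sum_{j\geq 2}\kappa_j(f_0(T_n))z^j/(j!\,L_n^j)$ has finite radius of convergence of order $L_n/c$, not infinite. This does not harm your argument, since you only use the expansion on discs $|z|\leq R$ with $cR/L_n<\tfrac12$, where the series converges and $\exp$ of it agrees with $\varphi_n$ by analytic continuation from a real neighbourhood of $0$; and locally uniform convergence on $\CC$ only requires the identity to hold on each fixed compact set for $n$ large. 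You should state the expansion with this restriction rather than globally. Second, the word ``equivalently'' linking divisibility of $\kappa_j(B)$ by $p(1-p)$ to the bound $|\kappa_j(B_k)|\leq c^jj!\,\kappa_2(B_k)$ overstates: divisibility alone controls nothing quantitative about the quotient polynomial, and the $c^jj!$ bound is the actual content of the standard lemma you need to cite (e.g.\ from \cite{Saulis}).
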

\begin{proof}
This is a direct consequence of \cite[Theorem 8.2.1]{FerayModPhi} in combination with Corollary \ref{cor:Bernoulli}. All assumptions in \cite{FerayModPhi} are readily checked from the explicit representation of $\sigma_n^2$ and $L_n^3$. In fact, using the notation in \cite{FerayModPhi} we have $k_n=n$, the polynomials $P_{n,j}$, $1\leq j\leq n$ are just the linear factors in \eqref{eq:DefRoots} and thus have maximal degree $1$, and $L_n=\Theta((\log n)^{1/3})=o((\log n)^{1/2})=o(\sigma_n)$, as $n\to\infty$.
\end{proof}

The random variables under consideration also satisfy sharp uniform cumulant bounds as considered in the limit theory for large deviations \cite{Saulis}. Let us recall that for $k\in\NN$ the $k$-th cumulant of a random variable $X$ is formally given by $\kappa_k(X):=(-i)^k{\dint^k\over\dint z^k}\log\EE\exp(izX)$. Combining now the representation of $f_0(T_n)$ as a sum of independent Bernoulli random variables from Corollary \ref{cor:Bernoulli} with \cite[Theorem 3.1]{Saulis} (where assumption $(B_\gamma)$ there is satisfied with $\gamma=0$ and $K=K_n=2$) one has that
\begin{equation}\label{eq:CumBound}
\kappa_k\Big({f_0(T_n)-\EE f_0(T_n)\over\sqrt{\var f_0(T_n)}}\Big) \leq k!\Big({4\over \var f_0(T_n)}\Big)^{k-2}
\end{equation}
for all integers $k\geq 3$.

\paragraph{Central limit theorem and normality zone.}

We continue our discussion about probabilistic properties of the random variables $f_0(T_n)$ with the central limit theorem in form of a Berry-Esseen bound. 

\begin{corollary}\label{cor:berryesseen}
For any $n\in\NN$ one has that
$$
\sup_{t\in\RR}\Big|\PP\Big({f_0(T_n)-\EE f_0(T_n)\over\sqrt{\var f_0(T_n)}}\geq t\Big)-\PP(Z\geq t)\Big| \leq {c\over\sqrt{\log n}}
$$
for some absolute constant $c\in(0,\infty)$, where $Z$ is a standard Gaussian random variable.
\end{corollary}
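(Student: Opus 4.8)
The plan is to derive the Berry--Esseen bound directly from the representation of $f_0(T_n)$ as a sum of independent Bernoulli random variables in Corollary \ref{cor:Bernoulli}. Writing $f_0(T_n)\eqdistr 1+B_2+\ldots+B_n$ with the $B_k$ independent but \emph{non-identically} distributed, the natural tool is the non-identically distributed Berry--Esseen theorem, which states that for a sum $S=\sum_{k}B_k$ of independent random variables one has
$$
\sup_{t\in\RR}\Big|\PP\Big({S-\EE S\over\sqrt{\var S}}\leq t\Big)-\PP(Z\leq t)\Big|\leq c_0\,{\sum_k\EE|B_k-\EE B_k|^3\over(\var S)^{3/2}}
$$
for an absolute constant $c_0$. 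First I would note that the constant shift by $1$ and by $\EE f_0(T_n)$ is irrelevant, and that replacing $\PP(\,\cdot\geq t)$ by $\PP(\,\cdot\leq t)$ changes neither side of the inequality (the Kolmogorov distance is symmetric under reflection), so it suffices to control the right-hand side of the classical bound.

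The key step is to estimate the third absolute central moments of the Bernoulli summands. For a Bernoulli variable with success probability $q_k:=1/(1-r_k)\in[0,1]$ one has $\var B_k=q_k(1-q_k)$ and the elementary bound $\EE|B_k-\EE B_k|^3=q_k(1-q_k)\big(q_k^2+(1-q_k)^2\big)\leq q_k(1-q_k)=\var B_k$, since $q_k^2+(1-q_k)^2\leq 1$. Summing over $k$ and using that $\sum_k\var B_k=\var f_0(T_n)=\sigma_n^2$, this yields
$$
\sum_{k=2}^n\EE|B_k-\EE B_k|^3\leq \sum_{k=2}^n\var B_k=\sigma_n^2.
$$
Plugging this into the Berry--Esseen estimate gives a right-hand side of at most $c_0\,\sigma_n^2/(\sigma_n^2)^{3/2}=c_0/\sigma_n$. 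Since Buchta's variance asymptotics recalled in the excerpt give $\sigma_n^2=\big({10\over 27}\log n\big)(1+o(1))$, in particular $\sigma_n\geq c_1\sqrt{\log n}$ for all $n$ with a suitable constant $c_1>0$, one concludes the claimed bound $c/\sqrt{\log n}$ after absorbing constants.

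The main obstacle is \emph{not} analytic but rather a matter of making the variance lower bound uniform in $n$: the asymptotic relation $\sigma_n^2\sim{10\over 27}\log n$ only controls large $n$, so I would need to argue that $\sigma_n^2\geq c_1\log n$ holds for \emph{all} $n\in\NN$ (with the understanding that for the finitely many small $n$ where $\log n$ is tiny, or $n=1$ where $f_0(T_n)$ is degenerate, the supremum is trivially bounded and can be absorbed into the constant $c$). This is handled by inspecting the explicit formula for $\sigma_n^2$ and checking positivity and the correct growth for every $n\geq 2$, which is a routine but necessary verification rather than a conceptual difficulty. The elegance of the argument lies entirely in the observation that the third-moment sum is dominated by the variance, so that the Berry--Esseen ratio collapses to the reciprocal of the standard deviation — this is precisely the phenomenon that makes the PF-sequence (equivalently, real-rootedness) structure so powerful here.
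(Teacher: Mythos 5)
Your proposal is correct and takes essentially the same route as the paper's own proof: the paper also deduces the bound by applying the Berry--Esseen theorem for sums of independent, non-identically distributed random variables (citing \cite{BatirovEtAl}) to the Bernoulli decomposition of Corollary \ref{cor:Bernoulli}, merely delegating to that reference the Lyapunov-ratio details you work out. Your explicit estimate $\EE|B_k-\EE B_k|^3\leq\var B_k$, giving the ratio bound $c_0/\sigma_n$, and the absorption of small $n$ into the constant are precisely the computations hidden behind the paper's citation.
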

\begin{proof}
This is a consequence of the Berry-Esseen theorem for sums of independent but non-identically distributed random variables \cite{BatirovEtAl} and the fact that $f_0(T_n)$ can be represented as such a sum as we have seen in Corollary \ref{cor:Bernoulli}. Alternatively, it is also a direct consequence of the general Berry-Esseen bound \cite[Corollary 2.1]{Saulis} and the cumulant bound \eqref{eq:CumBound}.
\end{proof}

\begin{remark}\rm 
\cite[Corollary 2.1]{Saulis} in fact implies that the constant $c$ in the previous corollary can be taken as $c=72$.
\end{remark}

Corollary \ref{cor:berryesseen} implies that for any fixed $t\in\RR$,
\begin{equation}\label{eq:CLTequiv}
\lim_{n\to\infty}{\PP\Big({f_0(T_n)-\EE f_0(T_n)\over\sqrt{\var f_0(T_n)}}\geq t\Big)\over \PP(Z\geq t)} = 1.
\end{equation}
It is now natural to go one step further and to ask for the maximal scale $(s_n)_{n\geq 1}$ such that \eqref{eq:CLTequiv} continues to hold for $t=t_n$ depending on $n$ in such a way that $t_n=o(s_n)$. This maximal scale is the so-called normality zone, which in a sense, describes the `domain of attraction' of the Gaussian tail behaviour. In our set-up the following holds.

\begin{corollary}
The normality zone for the sequence of random variables ${f_0(T_n)-\EE f_0(T_n)\over\sqrt{\var f_0(T_n)}}$ is of size $o((\log n)^{1/6})$, that is, \eqref{eq:CLTequiv} holds if $t$ is replaced by a sequence $(t_n)_{n\geq 1}$ satisfying $t_n=o((\log n)^{1/6})$.
\end{corollary}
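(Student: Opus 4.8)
The plan is to read the normality zone off directly from the mod-Gaussian convergence established in Corollary~\ref{cor:modGauss}, by feeding it into the general normality-zone theory for mod-Gaussian convergent sequences from \cite{FerayModPhi}. The first step is to reconcile the two standardizations that appear in the problem. Writing $\sigma_n^2:=\var f_0(T_n)$ and recalling that the variable $Y_n$ of \eqref{eq:defYn} is centred and scaled by $L_n=(L_n^3)^{1/3}$, the mod-Gaussian parameter of Corollary~\ref{cor:modGauss} is $w_n=\sigma_n^2/L_n^2=\Theta((\log n)^{1/3})$. Because $\sigma_n=L_n\sqrt{w_n}$, the fully standardized variable entering \eqref{eq:CLTequiv} is
$$
W_n:={f_0(T_n)-\EE f_0(T_n)\over\sqrt{\var f_0(T_n)}}={Y_n\over\sqrt{w_n}},
$$
so the task reduces to determining the normality zone of $Y_n/\sqrt{w_n}$ for a sequence $(Y_n)$ converging mod-Gaussian with parameter $w_n\to\infty$ and limiting function $\psi(z)=e^{z^3/6}$.

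Second, I would invoke the normality-zone theorem accompanying mod-Gaussian convergence, which yields $\PP(W_n\ge t)/\PP(Z\ge t)\to1$ uniformly for $t=o(w_n^{1/2})$. The exponent $1/2$ is transparent from the cumulant expansion: the cumulant generating function of $W_n$ is ${s^2\over2}+{s^3\over 6w_n^{3/2}}+o(1)$, whence $\kappa_3(W_n)=w_n^{-3/2}(1+o(1))$ while the higher normalized cumulants contribute only lower-order terms in the relevant range. The Cram\'er-type moderate-deviation expansion then gives $\log\big(\PP(W_n\ge t)/\PP(Z\ge t)\big)={t^3\over 6w_n^{3/2}}(1+o(1))$, which tends to $0$ exactly when $t=o(w_n^{1/2})$. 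Substituting $w_n=\Theta((\log n)^{1/3})$ gives $w_n^{1/2}=\Theta((\log n)^{1/6})$, the asserted zone.

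An independent route to the same conclusion runs through the Saulis--Statulevi\v{c}ius machinery: since $f_0(T_n)$ is a sum of bounded independent Bernoulli variables by Corollary~\ref{cor:Bernoulli}, the cumulant estimates of \cite{Saulis} apply with a moderate-deviation parameter $\Delta_n$ of the order of the standard deviation $\sqrt{\var f_0(T_n)}=\Theta((\log n)^{1/2})$; the associated normality zone $o(\Delta_n^{1/3})$ is again $o((\log n)^{1/6})$, in line with the Berry--Esseen rate $c/\sqrt{\log n}$ of Corollary~\ref{cor:berryesseen}. For bounded sequences $t_n$ the ratio in \eqref{eq:CLTequiv} already converges by that Berry--Esseen estimate, so the central-limit regime and the moderate-deviation regime together cover all $t_n=o((\log n)^{1/6})$.

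I expect the only delicate point to be the bookkeeping of exponents and normalizations: one must match the standardization of the cited moderate-deviation theorem to $W_n$, track how the slowly growing parameter $w_n=\Theta((\log n)^{1/3})$ combines with the square-root zone $o(w_n^{1/2})$, and confirm that neither the higher cumulants nor the $o(1)$-error in the mod-Gaussian limit shrinks the zone below $(\log n)^{1/6}$. Once these normalizations are pinned down, the statement is immediate.
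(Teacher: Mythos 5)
Your main argument coincides with the paper's proof: the paper obtains the normality zone directly from the mod-Gaussian convergence of Corollary \ref{cor:modGauss} together with the general normality-zone result \cite[Proposition 4.4.1]{FerayModPhi}, which for a mod-Gaussian sequence with parameter $w_n$ gives the zone $o(\sqrt{w_n})=o((\log n)^{1/6})$ for the fully standardized variable, exactly as in your first two paragraphs. Your additional Saulis--Statulevi\v{c}ius cross-check (with $\Delta_n=\Theta(\sqrt{\log n})$ and zone $o(\Delta_n^{1/3})$) is consistent but not part of the paper's argument, so it is a correct extra rather than a different proof.
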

\begin{proof}
This is an immediate consequence of the mod-Gaussian convergence established in Corollary \ref{cor:modGauss} and \cite[Proposition 4.4.1]{FerayModPhi}.
\end{proof}

\paragraph{At the edge of the normality zone: moderate deviations.}

Our next result addresses the question of what happens at the edge of the normalizy zone. Here, the limiting function $\psi(z)=e^{z^3/6}$ shows up.

\begin{corollary}
For any $n\in\NN$ and $x>0$ one has that
\begin{align*}
\PP\Big({f_0(T_n)-\EE f_0(T_n)\over\sqrt{\var f_0(T_n)}}\geq \sqrt{t_n}\,x\Big) &= {e^{-t_n{x^2\over 2}}\over x\sqrt{2\pi t_n}}e^{x^3\over 6}(1+o(1)),\\
\PP\Big({f_0(T_n)-\EE f_0(T_n)\over\sqrt{\var f_0(T_n)}}\leq -\sqrt{t_n}\,x\Big) &= {e^{-t_n{x^2\over 2}}\over x\sqrt{2\pi t_n}}e^{-{x^3\over 6}}(1+o(1)),
\end{align*}
where $t_n={\sigma_n^2\over L_n^2}=\big({10\over 3}({3\over 14})^{2/3}(\log n)^{1/3}\big)(1+o(1))$.
\end{corollary}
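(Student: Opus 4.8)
The plan is to obtain both asymptotics directly from the mod-Gaussian convergence established in Corollary \ref{cor:modGauss} by invoking the precise moderate deviation principle of the mod-$\phi$ framework, namely \cite[Theorem 4.3.1]{FerayModPhi}. The first step is a purely algebraic reduction matching the two normalizations. Writing $Y_n = (f_0(T_n)-\EE f_0(T_n))/L_n$ as in \eqref{eq:defYn} and recalling that $t_n = w_n = \sigma_n^2/L_n^2$, one has $\sqrt{w_n} = \sigma_n/L_n$, so that the standardized variable satisfies
$$
\frac{f_0(T_n)-\EE f_0(T_n)}{\sqrt{\var f_0(T_n)}} = \frac{Y_n}{\sqrt{w_n}}.
$$
Hence the right-tail event $\{(f_0(T_n)-\EE f_0(T_n))/\sigma_n \geq \sqrt{t_n}\,x\}$ is literally $\{Y_n \geq w_n x\}$, and the left-tail event is $\{Y_n \leq -w_n x\}$. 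This casts both probabilities into exactly the shape treated by the precise moderate deviation theorem.

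Second, I would apply that theorem to the right tail. Under mod-Gaussian convergence with parameters $w_n\to\infty$ and limiting function $\psi$, it yields, for every fixed $x>0$,
$$
\PP(Y_n \geq w_n x) = \frac{e^{-w_n x^2/2}}{x\sqrt{2\pi w_n}}\,\psi(x)\,(1+o(1)).
$$
Since here $\psi(z)=e^{z^3/6}$ and $w_n = t_n$, substituting $\psi(x)=e^{x^3/6}$ reproduces the first displayed asymptotic. For the left tail I would apply the same theorem to the sequence $-Y_n$. Replacing $z$ by $-z$ in the defining limit of Corollary \ref{cor:modGauss} and using that the Gaussian factor $e^{w_n z^2/2}$ is even shows that $-Y_n$ converges mod-Gaussian with the \emph{same} parameters $w_n$ but with limiting function $\tilde\psi(z)=\psi(-z)=e^{-z^3/6}$. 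The theorem then gives $\PP(Y_n \leq -w_n x) = \PP(-Y_n \geq w_n x) = (e^{-w_n x^2/2}/(x\sqrt{2\pi w_n}))\,\tilde\psi(x)(1+o(1))$, and $\tilde\psi(x)=e^{-x^3/6}$ is precisely the second displayed asymptotic. The opposite signs in the two exponential correction factors $e^{\pm x^3/6}$ are exactly the breaking of symmetry characteristic of this regime.

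The only genuine obstacle is verifying that the hypotheses of \cite[Theorem 4.3.1]{FerayModPhi} are met, and in particular that its zone of validity in $x$ covers every fixed $x>0$. This is where the strong form of Corollary \ref{cor:modGauss} is essential: the convergence holds locally uniformly on all of $\CC$ with an entire limiting function, and $Y_n$ is, up to centring and scaling by $L_n$, a sum of independent Bernoulli variables by Corollary \ref{cor:Bernoulli}. Their moment generating functions are entire and uniformly bounded on compacta, which supplies a zone of control as large as required, so the precise moderate deviation estimate applies for all fixed $x>0$. The remaining work is bookkeeping: matching the parameters $(w_n,\psi)$ of the theorem with $(t_n, e^{z^3/6})$ and recording $t_n = \big(\tfrac{10}{3}(\tfrac{3}{14})^{2/3}(\log n)^{1/3}\big)(1+o(1))$, already established in Corollary \ref{cor:modGauss}.
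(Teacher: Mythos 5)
Your proof is correct and follows essentially the same route as the paper: the paper's proof is precisely the combination of Corollary \ref{cor:modGauss} with the precise-deviations theorem for mod-Gaussian convergence from F\'eray--M\'eliot--Nikeghbali, and your reduction of the two events to $\{Y_n\geq w_nx\}$ and $\{-Y_n\geq w_nx\}$, together with the observation that $-Y_n$ converges mod-Gaussian with limiting function $\psi(-z)$, is exactly the bookkeeping implicit in that citation. The only slip is the reference number: the statement you quote (deviations at scale $O(t_n)$, valid for every fixed $x>0$ since the convergence holds on all of $\CC$) is \cite[Theorem 4.2.1]{FerayModPhi}, which moreover covers both tails directly, whereas Theorem 4.3.1 of that reference concerns the intermediate scales $o(t_n)$; this mislabelling does not affect the validity of the argument.
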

\begin{proof}
This follows from the mod-Gaussian convergence established in Corollary \ref{cor:modGauss} and \cite[Theorem 4.2.1]{FerayModPhi}.
\end{proof}

By taking logarithms and the limit as $n\to\infty$ one obtains from the previous corollary that, for any $x>0$,
$$
\lim_{n\to\infty}{1\over t_n}\log\PP\Big({f_0(T_n)-\EE f_0(T_n)\over\sqrt{t_n\var f_0(T_n)}}\geq x\Big) = -{x^2\over 2}
$$
for the sequence $t_n={\sigma_n^2\over L_n^2}$. The moderate deviations principle asserts that this behaviour continues to hold for more general sequences $(t_n)_{n\geq 1}$ as well.

\begin{corollary}
Let $(t_n)_{n\geq 1}$ be a sequence of positive real numbers satisfying $t_n\to\infty$ and $t_n=o(\sigma_n^2)=o(\log n)$. Then, for any $x>0$,
$$
\lim_{n\to\infty}{1\over t_n}\log\PP\Big({f_0(T_n)-\EE f_0(T_n)\over\sqrt{t_n\var f_0(T_n)}}\geq x\Big) = -{x^2\over 2}.
$$
\end{corollary}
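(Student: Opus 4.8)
The plan is to establish a full moderate deviations principle at speed $t_n$ via the G\"artner--Ellis theorem and then read off the stated one-sided limit. Write $W_n := (f_0(T_n)-\EE f_0(T_n))/\sqrt{\var f_0(T_n)}$ for the standardized vertex number and $Z_n := W_n/\sqrt{t_n}$, so that the probability in the statement is $\PP(Z_n\ge x)$. To apply G\"artner--Ellis at speed $t_n$ I would compute the limiting scaled cumulant generating function
$$
\Lambda(\lambda) := \lim_{n\to\infty}{1\over t_n}\log\EE\exp(t_n\lambda Z_n) = \lim_{n\to\infty}{1\over t_n}\log\EE\exp(\lambda\sqrt{t_n}\,W_n),\qquad \lambda\in\RR.
$$

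The key step is to show $\Lambda(\lambda)=\lambda^2/2$ for every $\lambda\in\RR$, and this is precisely where the hypothesis $t_n=o(\sigma_n^2)$ and the uniform cumulant bound \eqref{eq:CumBound} enter. Expanding the cumulant generating function of the standardized variable as $\log\EE\exp(sW_n)={s^2/2}+\sum_{k\ge3}{\kappa_k(W_n)\over k!}s^k$ (using $\kappa_1(W_n)=0$ and $\kappa_2(W_n)=1$) and substituting $s=\lambda\sqrt{t_n}$ gives
$$
{1\over t_n}\log\EE\exp(\lambda\sqrt{t_n}\,W_n) = {\lambda^2\over2}+\sum_{k\ge3}{\kappa_k(W_n)\over k!}\lambda^k t_n^{k/2-1}.
$$
By \eqref{eq:CumBound} the tail is bounded in absolute value by $\sum_{k\ge3}(4/\sigma_n^2)^{k-2}|\lambda|^k t_n^{k/2-1}=\lambda^2\sum_{m\ge1}q_n^m$ with $q_n:=4|\lambda|\sqrt{t_n}/\sigma_n^2$, after reindexing $m=k-2$. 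Since $t_n=o(\sigma_n^2)$ forces $\sqrt{t_n}=o(\sigma_n)$ and $\sigma_n\to\infty$, we have $q_n\to0$; hence for large $n$ the geometric series converges and is bounded by $2\lambda^2 q_n\to0$. This yields $\Lambda(\lambda)=\lambda^2/2$.

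With $\Lambda$ finite, differentiable and therefore essentially smooth on all of $\RR$, the G\"artner--Ellis theorem provides a full large deviation principle for $Z_n$ at speed $t_n$ with good rate function $I(x)=\Lambda^*(x)=\sup_\lambda(\lambda x-\lambda^2/2)=x^2/2$. For fixed $x>0$, the upper bound applied to the closed set $[x,\infty)$ gives $\limsup_{n\to\infty} t_n^{-1}\log\PP(Z_n\ge x)\le-\inf_{y\ge x}I(y)=-x^2/2$, since $I$ is increasing on $[0,\infty)$, while the lower bound applied to the open set $(x,\infty)$ gives $\liminf_{n\to\infty} t_n^{-1}\log\PP(Z_n> x)\ge-\inf_{y>x}I(y)=-x^2/2$ by continuity of $I$. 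Combining the two, together with $\PP(Z_n\ge x)\ge\PP(Z_n>x)$, yields the claimed limit.

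The main obstacle I anticipate is purely the cumulant estimate of the second paragraph: one must verify that the scaled cumulant generating function genuinely collapses to the Gaussian expression $\lambda^2/2$ across the whole range of $\lambda$, which is exactly the content of the interplay between the moderate speed $t_n=o(\sigma_n^2)$ and the geometric decay of the cumulant contributions afforded by \eqref{eq:CumBound}. Everything afterwards is standard large-deviation bookkeeping. As a shorter alternative, one may bypass G\"artner--Ellis altogether and invoke the moderate-deviation results available for mod-Gaussian convergent sequences in \cite{FerayModPhi} together with the mod-Gaussian convergence from Corollary \ref{cor:modGauss}; the computation of $\Lambda$ above is essentially the hands-on version of that machinery.
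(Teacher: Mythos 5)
Your proof is correct, but it takes a different route from the paper. The paper's proof is a two-line citation: it combines the cumulant bound \eqref{eq:CumBound} with the moderate deviation principle of D\"oring and Eichelsbacher \cite[Theorem 1.1]{DoeringEichelsbacher}, which is precisely a ``moderate deviations via cumulants'' black box. You instead re-derive the needed special case of that result by hand: starting from the same key input \eqref{eq:CumBound} (itself a consequence of the Bernoulli decomposition in Corollary \ref{cor:Bernoulli}), you show that the scaled cumulant generating function collapses to $\lambda^2/2$ and then invoke G\"artner--Ellis. Your geometric-series estimate is sound, and in fact it exposes that only $\sqrt{t_n}=o(\sigma_n^2)$ is needed for the limit $\Lambda(\lambda)=\lambda^2/2$, a weaker condition than the hypothesis $t_n=o(\sigma_n^2)$; this is the kind of quantitative information the citation hides. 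What the paper's route buys is brevity and, as noted in its subsequent remark, the full MDP for arbitrary Borel sets --- though your G\"artner--Ellis argument delivers that too. Two small points you should make explicit. First, \eqref{eq:CumBound} as stated is one-sided; since cumulants of order $\geq 3$ can be negative, you need the absolute-value version $|\kappa_k(W_n)|\leq k!\,(4/\sigma_n^2)^{k-2}$, which is what \cite[Theorem 3.1]{Saulis} actually provides. Second, substituting $s=\lambda\sqrt{t_n}$ into the cumulant series requires knowing that the series converges there and equals $\log\EE\exp(sW_n)$; this holds because the same bound gives absolute convergence for $|s|<\sigma_n^2/4$ (and $|\lambda|\sqrt{t_n}<\sigma_n^2/4$ eventually), after which a standard analytic-continuation argument (the moment generating function of the bounded variable $W_n$ is entire) identifies the sum with the cumulant generating function on the real segment of that disk. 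Neither point is a genuine gap, but both belong in a complete write-up.
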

\begin{proof}
This follows immediately from the cummulant bound \eqref{eq:CumBound} together with \cite[Theorem 1.1]{DoeringEichelsbacher}.
\end{proof}

\begin{remark}\rm
The result in \cite{DoeringEichelsbacher} actually implies more than the limit relation in the previous corollary. We actually have that for any Borel set $B\subset\RR$ (and not just for $B$ of the form $[x,\infty)$ for $x>0$), 
\begin{align*}
-\inf_{x\in{\rm int}\,B}{x^2\over 2} &\leq\liminf_{n\to\infty}{1\over t_n}\log\PP\Big({f_0(T_n)-\EE f_0(T_n)\over\sqrt{t_n\var f_0(T_n)}}\in B\Big)\\
&\leq \limsup_{n\to\infty}{1\over t_n}\log\PP\Big({f_0(T_n)-\EE f_0(T_n)\over\sqrt{t_n\var f_0(T_n)}}\in B\Big)\leq -\inf_{x\in{\rm cl}\,B}{x^2\over 2},
\end{align*}
where ${\rm int}\,B$ and ${\rm cl}\, B$ denotes the interior and the closure of $B$, respectively and where $(t_n)_{n\geq 1}$ is any sequence of positive real numbers satisfying $t_n\to\infty$ and $t_n=o(\log n)$.
\end{remark}

\paragraph{The local limit theorem.}

Finally, we state the local limit theorem for $f_0(T_n)$. In a sense it describes how well the histogram of the normalized vertex number $f_0(T_n)$ can be approximated by that of the limiting Gaussian density.

\begin{corollary}
Let $\delta\in(0,1/2)$, $x\in\RR$ and $B\subset\RR$ be a Jordan measurable set with Jordan measure $J(B)>0$. Then
\begin{align*}
	\lim_{n\to\infty}{1\over(\log n)^\delta}\PP\Big({f_0(T_n)-\EE f_0(T_n)\over\sqrt{\var f_0(T_n)}} - x \in (\log n)^\delta B\Big) = {e^{-x^2/2}\over\sqrt{2\pi}}\,J(B).
\end{align*}
\end{corollary}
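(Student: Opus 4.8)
The plan is to deduce the statement from the local limit theorem for mod-Gaussian convergent sequences, \cite[Theorem 4.3.1]{FerayModPhi}, applied to the standardised vertex number $W_n:=(f_0(T_n)-\EE f_0(T_n))/\sqrt{\var f_0(T_n)}$. By Corollary \ref{cor:modGauss} the sequence $Y_n=(f_0(T_n)-\EE f_0(T_n))/L_n$ converges in the mod-Gaussian sense on all of $\CC$ with parameters $w_n=\sigma_n^2/L_n^2\to\infty$ and limiting function $\psi(z)=e^{z^3/6}$, and $W_n=Y_n/\sqrt{w_n}$. Mod-Gaussian convergence alone already yields the central limit theorem; the extra input needed for a \emph{local} statement is that $f_0(T_n)$ is supported on a lattice. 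Indeed, by Corollary \ref{cor:Bernoulli} we have $f_0(T_n)\eqdistr 1+B_2+\ldots+B_n$ with independent Bernoulli summands whose success probabilities $1/(1-r_k)$ lie in $(0,1)$ for $k\geq 2$; hence $f_0(T_n)$ attains every value in $\{1,\ldots,n\}$ with positive probability and is a lattice variable of span exactly $1$. We are thus in the lattice case of the mod-Gaussian local limit theorem.

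First I would verify the hypotheses of \cite[Theorem 4.3.1]{FerayModPhi}. The Bernoulli representation makes the characteristic function factorise as $\EE e^{i\theta f_0(T_n)}=e^{i\theta}\prod_{k=2}^n\big(1-q_k+q_ke^{i\theta}\big)$ with $q_k=1/(1-r_k)$, where $|1-q_k+q_ke^{i\theta}|^2=1-2q_k(1-q_k)(1-\cos\theta)$. This single identity supplies both ingredients: near $\theta=0$ it gives the Gaussian-type decay underlying the zone of control already exploited in Corollary \ref{cor:modGauss}, while for $\theta$ in any compact subset of $(0,2\pi)$ each factor is bounded away from $1$, so the product decays and the Fourier transform stays uniformly small away from the origin modulo $2\pi$. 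These are exactly the non-degeneracy and zone-of-control conditions needed to pass from mod-Gaussian convergence to a local limit theorem in the lattice setting.

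With the hypotheses in place, the remaining step is to convert the output of \cite[Theorem 4.3.1]{FerayModPhi} into the displayed Riemann-sum form, in which the Jordan measure $J(B)$ appears. The pointwise local limit theorem gives, uniformly for lattice points $m$ whose standardised value $(m-\EE f_0(T_n))/\sqrt{\var f_0(T_n)}$ lies in a bounded region around $x$, the estimate $\PP(f_0(T_n)=m)=\tfrac{1}{\sqrt{\var f_0(T_n)}}\big(\tfrac{e^{-x^2/2}}{\sqrt{2\pi}}+o(1)\big)$. The standardised variable $W_n$ therefore lives on a lattice of spacing $1/\sqrt{\var f_0(T_n)}=\Theta((\log n)^{-1/2})$, using $\var f_0(T_n)=\big(\tfrac{10}{27}\log n\big)(1+o(1))$. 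Summing the pointwise estimate over the lattice points falling in the window and recognising the sum as a Riemann sum produces $\tfrac{e^{-x^2/2}}{\sqrt{2\pi}}$ times the Jordan measure of the window; Jordan measurability of $B$ is precisely what controls the boundary contributions in this approximation. The constraint $\delta\in(0,1/2)$ is exactly what guarantees that the window is wide compared with the lattice spacing $\Theta((\log n)^{-1/2})$, so that the number of contributing lattice points tends to infinity and the Riemann-sum passage is legitimate.

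The main obstacle is the Fourier control in the second step: one must establish the quantitative estimate on the characteristic function over the \emph{full} frequency range with the correct index, matching the rate of decay near the origin (the zone of control) to the window exponent $\delta$, while simultaneously ruling out mass escaping to the frequencies $\theta\in 2\pi\ZZ$ dictated by the span-$1$ lattice. Once this uniform control is secured, the Riemann-sum passage and the identification of $J(B)$ are routine, with Jordan measurability doing the bookkeeping for the error at the boundary of the window.
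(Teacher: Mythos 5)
Your high-level plan coincides with the paper's: the paper's entire proof is a one-line application of a general local limit theorem for mod-$\phi$ convergent sequences to the mod-Gaussian convergence of Corollary \ref{cor:modGauss}. However, your key citation is wrong. The local limit theorem you invoke as \cite[Theorem 4.3.1]{FerayModPhi} is not in that volume: Chapter 4 there provides normality zones and precise deviations (the results the paper cites as Theorem 4.2.1 and Proposition 4.4.1), not a local limit theorem over Jordan measurable sets at scales $(\log n)^{-\delta}$. That statement is precisely what the later paper of Dal Borgo, M\'eliot and Nikeghbali supplies, and it is the reference the paper actually uses here: \cite[Theorem 3.2]{DalBorgoEtaL}.

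The good news is that the body of your sketch amounts to a self-contained, elementary substitute for the black box, and it is essentially sound. From Corollary \ref{cor:Bernoulli} one gets $\EE e^{i\theta f_0(T_n)}=e^{i\theta}\prod_{k=2}^n(1-q_k+q_k e^{i\theta})$, and your identity $|1-q_k+q_k e^{i\theta}|^2=1-2q_k(1-q_k)(1-\cos\theta)$ gives $|\EE e^{i\theta f_0(T_n)}|\leq\exp\big(-\var f_0(T_n)\,(1-\cos\theta)\big)$ on $[-\pi,\pi]$; since $\var f_0(T_n)=\sum_{k\geq 2}q_k(1-q_k)\sim{10\over 27}\log n\to\infty$, this single bound yields both the Gaussian decay near $\theta=0$ and uniform smallness away from $2\pi\ZZ$, hence the pointwise span-$1$ lattice local limit theorem by Fourier inversion; your Riemann-sum step, with Jordan measurability controlling the boundary count and $\delta<1/2$ ensuring the window $(\log n)^{-\delta}$ is much wider than the lattice spacing $\Theta((\log n)^{-1/2})$, then finishes. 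Note that on this elementary route Corollary \ref{cor:modGauss} is not needed at all --- the Bernoulli representation and the variance asymptotics do all the work; what the paper's route buys is that the same mod-Gaussian input also delivers the normality zone, precise deviations and this corollary from one framework. Two corrections, though. First, your claim that away from the origin ``each factor is bounded away from $1$'' is not right: most roots $r_k$ are far from $0$, so most $q_k$ are close to $0$ and the corresponding factors are close to $1$; it is the product that decays, through the divergence of $\sum_k q_k(1-q_k)$, which is exactly what your displayed identity provides. Second, you silently read the statement with a shrinking window; that is indeed the only sensible reading (as printed, with window $(\log n)^{\delta}B$ and prefactor $(\log n)^{-\delta}$, the left-hand side would tend to $0$), and it is the form that both your argument and \cite[Theorem 3.2]{DalBorgoEtaL} establish.
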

\begin{proof}
Again, this follows from the mod-Gaussian convergence established in Corollary \ref{cor:modGauss} and the general local limit theorem \cite[Theorem 3.2]{DalBorgoEtaL}.
\end{proof}

\subsection*{Acknowledgement}
This work has been supported by the Deutsche Forschungsgemeinschaft (DFG) via SPP 2265 \textit{Random Geometric Systems}.

\addcontentsline{toc}{section}{References}


\end{document}